\newtheorem{lem}{Lemma}[section]
\newtheorem{prop}[lem]{Proposition}
\newtheorem{thm}[lem]{Theorem}
\newtheorem{df}{Definition}[section]
\newtheorem{rem}[lem]{Remark}
\def\C{{\mathbb C}}  
\def\Hom{{\operatorname{Hom}}}
\def\Q{{\mathbb Q}}   
\def\R{{\mathbb R}}    
\def\Z{{\mathbb Z}}    
\def\N{{\mathbb N}}    
\title{The $S$-transform in arbitrary dimensions}
\author{Roland Friedrich and John McKay}
\begin{document}
\maketitle
\begin{abstract}
In this note we report the solution of the problem of defining the $S$-transform in Free Probability Theory in arbitrary dimensions. This is achieved by generalising the theory and embedding it into an algebraic-geometric framework. Finally, we classify the groups arising as distributions of $s$-tuples of non-commutative random variables.
\end{abstract}
\section{Introduction}
The addition and multiplication problem for pairs of free non-commutative random variables was solved by Voiculescu~\cite{V}. He introduced two series, the $S_V$ and the $R_V$ transform, cf.~\cite{VDN}. Subsequently, Speicher~\cite{S} and Nica and Speicher~\cite{NS} extended the $R$-transform to arbitrary dimensions considering the free cumulants instead of the moment series. The essential tool to achieve this, is the insight that free probability theory can be described in terms of non-crossing partitions, as realised and developed by Speicher~\cite{S}. Further, a multiplicative operation, the so-called ``boxed" convolution was introduced and studied. Among the many remarkable properties it has, one should mention that it defines a generally non-commutative group structure on the set of non-commutative power series with complex coefficients.

Earlier, Voiculescu described in the one-dimensional case the group structure on non-commutative probability laws as a projective limit of finite dimensional complex Lie groups~\cite{V}.

Since its introduction in $1987$~\cite{V},  however, the nature of the higher-dimensional $S$-transform has remained an unresolved problem. Additionally, the question of how to go, from the additive to the multiplicative problem, if possible at all, also stayed open, cf.~\cite{S}.

Here, we give the answers to all the questions raised above. The main tool to do so is by generalising the entire theory of free probability theory from its usual $\C$-valued setting to arbitrary commutative unital rings, which by its combinatorial nature is possible, and then to study it by using algebraic-geometric methods. As it turns out, the theory becomes nice and pleasant.

This is a slightly revised version of a talk delivered at the Fields Institute in July 2013. Therefore, it contains only the ``bare bones" and some of the major results. General references for standard results and common notations of Free Probability Theory include~\cite{NS,VDN}. The detailed content, in particular the proofs, and also further extensions can be found in the following (up-coming) publications~\cite{FMcK1,FMcK2}.

\section{The one-dimensional case}
We prove that in the one-dimensional non-commutative case, the situation is analogous to classical probability theory.
\begin{thm}[\cite{FMcK1}]
\label{LOG}
There exists a group isomorphism $\operatorname{LOG}:(\Sigma^{\times}_1, \boxtimes)\rightarrow(\Sigma,\boxplus)$, with inverse $\operatorname{EXP}$, defined by the diagram: 
\[
\begin{xy}
  \xymatrix{
      \Lambda(\C) \ar[d]_{\frac{d}{dz}\log}^{\text{``ghost map"}} & (\Sigma^{\times}_1, \boxtimes_V) \ar[l]_{S_V} \ar[d]^{\operatorname{LOG}} \\
                              {\C[[z]]}\ar[r]^{R^{-1}_V} & (\Sigma,\boxplus_V) 
               }
\end{xy}
\]
\end{thm}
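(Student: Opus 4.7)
The plan is to show that each of the three arrows in the commutative diagram is separately a group isomorphism between the indicated additive/multiplicative structures on formal power series, whence the composition $\operatorname{LOG}:=R_V^{-1}\circ(\tfrac{d}{dz}\log)\circ S_V$ is a group isomorphism by general nonsense, with inverse $\operatorname{EXP}=S_V^{-1}\circ\exp\!\int\circ\,R_V$.

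First, I would invoke Voiculescu's multiplicative theorem for the $S$-transform: $S_V:(\Sigma^{\times}_1,\boxtimes_V)\to(\Lambda(\C),\cdot)$ is a bijection satisfying $S_V(\mu\boxtimes_V\nu)=S_V(\mu)\cdot S_V(\nu)$, where $\Lambda(\C)=1+z\C[[z]]$ is viewed as a group under ordinary multiplication of formal power series (equivalently, as the multiplicative group of big Witt vectors over $\C$). This gives the top horizontal isomorphism. Dually, on the additive side, Voiculescu's additivity theorem yields that $R_V:(\Sigma,\boxplus_V)\to(\C[[z]],+)$ is bijective and linearises $\boxplus_V$, so $R_V^{-1}$ is a group isomorphism from the additive group of $\C[[z]]$ to $(\Sigma,\boxplus_V)$.

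The central ingredient is the middle vertical arrow. The logarithmic derivative $f\mapsto f'/f$ sends $\Lambda(\C)=1+z\C[[z]]$ into $\C[[z]]$ and converts multiplication to addition, because $(fg)'/(fg)=f'/f+g'/g$. In characteristic zero it is a bijection, with inverse $g\mapsto\exp\!\int g\,dz$ defined at the level of formal power series. In the Witt-vector interpretation this is precisely the ghost map: if one formally writes $f(z)=\prod_i(1-a_iz)^{-1}$, then
\[
z\,\tfrac{d}{dz}\log f(z)=\sum_{n\geq 1}\Bigl(\sum_i a_i^n\Bigr)z^n,
\]
so the coefficients of $f'/f$ record the power-sum ghost components of the underlying (formal) multiset; this justifies the terminology. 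In particular the map is a group isomorphism from the multiplicative group of $\Lambda(\C)$ to the additive group $\C[[z]]$.

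Composing the three isomorphisms gives $\operatorname{LOG}$ as a group isomorphism, and reversing each factor produces $\operatorname{EXP}$. The one point requiring genuine care, rather than quotation, is verifying that the image of $S_V$ really coincides with $\Lambda(\C)$ in the normalisation used throughout the paper, and that the composition lands in $\Sigma$ rather than some enlargement; this is essentially a bookkeeping check on constant terms, and I expect it to be the main technical obstacle in an otherwise formal argument. Once these identifications are pinned down, the commutativity of the diagram is tautological and the group-isomorphism property of $\operatorname{LOG}$ follows from the fact that each factor intertwines the relevant binary operations.
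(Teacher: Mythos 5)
Your proposal is correct and follows exactly the factorisation that the paper's diagram encodes: the paper gives no proof of this theorem here (it defers to~\cite{FMcK1}), but the intended argument is precisely your composition $\operatorname{LOG}=R_V^{-1}\circ\bigl(\tfrac{d}{dz}\log\bigr)\circ S_V$ of Voiculescu's multiplicative and additive isomorphism theorems with the logarithmic-derivative (ghost) isomorphism $(\Lambda(\C),\cdot)\rightarrow(\C[[z]],+)$. The only points to keep straight are the one you already flag --- the normalisation under which $S_V$ is a bijection of $\Sigma^{\times}_1$ onto $1+z\C[[z]]$ --- and the fact that the diagram uses $\tfrac{d}{dz}\log$ rather than $z\tfrac{d}{dz}\log$, so that the image is all of $\C[[z]]$ and matches the domain of $R_V^{-1}$.
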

In fact, ${\C[[z]]}$ is a {\bf commutative unital ring}, endowed with the {\bf Hadamard multiplication}:
$$
\label{Hada}
\left(\sum_{n=0}^{\infty}a_n z^n\right)\star\left(\sum_{n=0}^{\infty}b_n z^n\right):=\sum_{n=0}^{\infty}a_n b_n z^n~.
$$

\section{Witt vectors and Free Probability}
The algebraic structure of the distribution of non-commutative random variables is isomorphic to the abelian part of the Witt vectors, as we showed in~\cite{FMcK1}.  
\begin{thm}[\cite{FMcK1}]
\begin{itemize}
\item The set $\Sigma^{\times}_1$ of probability distributions with mean $1$, carries the structure of a { commutative unital ring},  with binary operations $(\boxtimes_V,\circledast)$, such that $(\Sigma^{\times}_1,\boxtimes_V,\circledast)$ is isomorphic to the ring $(\Lambda(\C),\cdot,\ast,1, 1-z)$. The multiplication $\circledast$ in $\Sigma_1^{\times}$ satisfies 
$$
\mu_1\circledast\mu_2=S_V^{-1}(S_V(\mu_1)\ast S_V(\mu_2))\quad\text{for $\mu_1,\mu_2\in\Sigma_1^{\times}$}.
$$
\item There exists an unital ring structure on $\Sigma$, with binary operations $(\boxplus_V,\boxdot)$, where $\boxdot$ denotes the multiplication. This ring is isomorphic to the ring $(\C[[z]],+,\star,\underline{0},\underline{1})$.
\item The rings $(\Sigma^{\times}_1,\boxtimes_V,\circledast)$ and $(\Sigma,\boxplus_V,\boxdot)$ are isomorphic, with the isomorphisms given by $\operatorname{LOG}$ and $\operatorname{EXP}$, respectively.
\end{itemize}
\end{thm}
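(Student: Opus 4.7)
The strategy is to use the group isomorphism supplied by Theorem~\ref{LOG} as the backbone and to transport the secondary multiplications across the horizontal arrows of its diagram. In this way, all three assertions reduce to classical facts about the big Witt ring $\Lambda(\C)$ together with its ghost map.

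For the first bullet, recall that $S_V$ is already a group isomorphism $(\Sigma^{\times}_1,\boxtimes_V)\to(\Lambda(\C),\cdot)$ by Voiculescu. It then suffices to \emph{define} $\circledast$ as the pullback of the Witt multiplication $\ast$ along $S_V$, i.e.\ by the displayed formula. Commutativity, associativity, distributivity over $\boxtimes_V$, and the identities corresponding to $1$ and $1-z$ transfer from $\Lambda(\C)$ tautologically, making $S_V$ a ring isomorphism by construction. The second bullet is the same recipe on the additive side: $R_V$ is a bijection $(\Sigma,\boxplus_V)\to(\C[[z]],+)$ respecting the group law, and we define $\boxdot$ as the pullback of the Hadamard product $\star$. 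Since the ring axioms for $(\C[[z]],+,\star,\underline{0},\underline{1})$ are elementary, they transfer wholesale to $(\Sigma,\boxplus_V,\boxdot)$.

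For the third bullet, Theorem~\ref{LOG} realises $\operatorname{LOG}$ as $R_V^{-1}\circ D\circ S_V$ with $D:=\frac{d}{dz}\log$, and already asserts that this is a group isomorphism for the additive layers. By construction of $\circledast$ and $\boxdot$ in the first two bullets, it only remains to check that the ghost map
$$
D:(\Lambda(\C),\cdot,\ast)\longrightarrow(\C[[z]],+,\star)
$$
is a ring homomorphism. The identity $D(fg)=D(f)+D(g)$ is immediate from the logarithmic derivative. For $D(f\ast g)=D(f)\star D(g)$, one reduces via the universal characterisation of $\ast$ on rank-one elements $(1-\alpha z)\ast(1-\beta z)=1-\alpha\beta z$ (and bilinearity over $\cdot$) to the power-sum identity: the Taylor coefficients of $D(f)$ are, up to shift and sign, the ghost components $p_n(f)=\sum_i\alpha_i^n$, and these multiply componentwise under Witt multiplication. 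Since $\C$ is a $\Q$-algebra, $D$ is bijective and hence a ring isomorphism; composing with $S_V$ and $R_V^{-1}$ shows $\operatorname{LOG}$ is a ring isomorphism with inverse $\operatorname{EXP}$.

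The main obstacle sits squarely at the compatibility $D(f\ast g)=D(f)\star D(g)$. Although classical in Witt-vector language, one has to match normalisations (the particular shift and sign in $\frac{d}{dz}\log$ against the more customary $-t\frac{d}{dt}\log$) and verify that the rank-one formula together with bilinearity really pins down the inherited $\ast$ product on $\Lambda(\C)$. Everything else — verification of the ring axioms, identification of units, and well-definedness of $\circledast$ and $\boxdot$ on the intended domains — is a formal transport of structure once this single ghost-map compatibility is in place.
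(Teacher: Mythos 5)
The paper itself contains no proof of this theorem: it is imported wholesale from~\cite{FMcK1}, and the present note only records the statement. Measured against the surrounding material, your proposal is the right reconstruction. Transport of structure along $S_V$ and $R_V$ is visibly what is intended, since the statement itself \emph{defines} $\circledast$ by the pullback formula, and the later propositions (Dirac masses, semicircle laws, the free Poisson law as the $\boxdot$-unit) are exactly the computations one obtains by reading $\boxdot$ as the Hadamard product of $R_V$-transforms. Your reduction of all three bullets to the single claim that the ghost map $D=\frac{d}{dz}\log$ carries $(\Lambda(\C),\cdot,\ast)$ isomorphically onto $(\C[[z]],+,\star)$ matches the diagram of Theorem~\ref{LOG}, and invoking that $\C$ is a $\Q$-algebra for bijectivity of $D$ is the correct reason the one-dimensional case is special (compare Theorem~\ref{main-iso} via Lazard).

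The one point you flag but do not resolve is exactly where a literal reading would break, so it deserves more than a wave. With the $\ast$-unit taken to be $1-z$, as in the statement, the rank-one rule is $(1-\alpha z)\ast(1-\beta z)=1-\alpha\beta z$, and then $\frac{d}{dz}\log(1-\alpha z)=-\sum_{n\ge0}\alpha^{n+1}z^n$ produces a global sign, giving $D(f\ast g)=-D(f)\star D(g)$; taken at face value, $D$ is \emph{not} a ring homomorphism for that normalisation. The paper's own internal evidence (the $\boxdot$-unit is $\underline{1}=\sum_{n\ge0}z^n$, and $\frac{d}{dz}\log\frac{1}{1-z}=\sum_{n\ge0}z^n$) shows the convention actually in force must be the one with generators $(1-\alpha z)^{-1}$ and unit $(1-z)^{-1}$, or equivalently the ghost map $-\frac{d}{dz}\log$. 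Once you commit to one consistent convention your argument closes; until then, ``up to shift and sign'' is a genuine loose end rather than a footnote, and it is the only substantive gap in an otherwise sound transport-of-structure proof.
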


\section{Partial ring structure on one-dimensional probability measures}
As a consequence we derive a partial ring structure for the one-dimensional probability distributions have a partial ring structure, i.e. certain types of probability measures behave also nicely under multiplication. For reference and background information on the distributions discussed, cf.~\cite{NS,VDN}.

Let $\delta_a$ be a {\bf Dirac} distribution supported at $a\in\R$.
Its Cauchy transform is 
$$
G_{\delta_a}(z)=\int_{\R}\frac{1}{z-t}\delta_a(t)dt=\frac{1}{z-a}~,
$$
and
$$
\frac{1}{{R}_V(z)+(1/z)-a}=z\quad\Leftrightarrow\quad {R}_V(z)=a.
$$

\begin{prop}
Let $a,b\in\R$, and $\delta_a,\delta_b$ the corresponding point masses supported at $a$ and $b$, respectively. Then Dirac distributions are freely Hadamard multiplied by multiplying their supports, i.e. 
$$
\delta_a\boxdot\delta_b=\delta_{ab}~.
$$
\end{prop}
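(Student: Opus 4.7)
The plan is to reduce the claim directly to a calculation on the $R_V$-side through the ring isomorphism $R_V:(\Sigma,\boxplus_V,\boxdot)\xrightarrow{\sim}(\C[[z]],+,\star,\underline{0},\underline{1})$ established in the Witt-vector theorem above. Under this isomorphism the multiplication $\boxdot$ is, by construction, transported to the Hadamard product $\star$, so that for any $\mu,\nu\in\Sigma$ one has
\[
\mu\boxdot\nu \;=\; R_V^{-1}\bigl(R_V(\mu)\star R_V(\nu)\bigr).
\]
Hence it suffices to compute $R_V(\delta_a)\star R_V(\delta_b)$ and identify its preimage.

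First, I would record, as done in the excerpt, that the defining functional equation for the $R$-transform of a Dirac mass yields $R_V(\delta_a)(z)=a$, i.e.\ the constant power series $a+0\cdot z+0\cdot z^2+\cdots$; and similarly $R_V(\delta_b)(z)=b$, $R_V(\delta_{ab})(z)=ab$. Next, the Hadamard product of two power series whose only nonzero coefficient lies in degree zero is immediate from the componentwise definition:
\[
\Bigl(\sum_{n=0}^{\infty}a_n z^n\Bigr)\star\Bigl(\sum_{n=0}^{\infty}b_n z^n\Bigr)=\sum_{n=0}^{\infty}a_n b_n z^n,
\]
so $R_V(\delta_a)\star R_V(\delta_b)=ab$ as a constant series. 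Combining these facts,
\[
\delta_a\boxdot\delta_b \;=\; R_V^{-1}\bigl(R_V(\delta_a)\star R_V(\delta_b)\bigr)\;=\;R_V^{-1}(ab)\;=\;\delta_{ab}.
\]

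There is essentially no obstacle once the preceding ring-isomorphism theorem is in hand: the whole argument is a one-line transport-of-structure together with the observation that Dirac masses correspond precisely to the constant series, which form the subring fixed by the Hadamard product. The only point that deserves a brief verification is the compatibility of $\boxdot$ with $\star$ under $R_V$, but this is exactly the content of the second bullet of the Witt-vector theorem cited above. In this sense the proposition can be read as saying that, via $R_V$, the Dirac masses $\{\delta_a\}_{a\in\R}$ form a copy of $\R$ inside $(\Sigma,\boxplus_V,\boxdot)$ isomorphic to the usual field $(\R,+,\cdot)$.
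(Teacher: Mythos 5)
Your proposal is correct and follows exactly the route the paper intends: the text immediately preceding the proposition computes $R_{\delta_a}(z)=a$ from the Cauchy transform, and the Witt-vector theorem defines $\boxdot$ precisely as the transport of the Hadamard product $\star$ along $R_V$, so the claim reduces to the one-line computation $a\star b=ab$ on constant series. Your closing observation that the Dirac masses realize a copy of $(\R,+,\cdot)$ inside $(\Sigma,\boxplus_V,\boxdot)$ is a correct and pleasant addition, consistent with the paper's theme of a ``partial ring structure'' on probability measures.
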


For $a,b\in\R$, $r,s>0$ the {\bf semicircle law} centred at $a$, of radius $r$, is defined as the distribution $\gamma_{a,r}:\C[z]\rightarrow\C$ given by
$$
\gamma_{a,r}(z^n):=\frac{2}{\pi r^2}\int_{a-r}^{a+r} t^n\sqrt{r^2-(t-a)^2}\,dt\qquad \forall n\in\N.
$$
It is determined, as is the Gaussian law, by its first and second moment, i.e., by $\gamma_{a,r}(z)=a$ and $\gamma_{a,r}(z^2)=a^2+r^2/4$. Its $R_V$-transform equals
$$
\label{Rsemicircle}
{R}_{\gamma_{a,r}}(z)=a+\frac{r^2}{4}z~.
$$

\begin{prop}
The family of semicircular distributions is closed both under additive free $\boxplus_V$, and point-wise (Hadamard) free multiplicative $\boxdot$ convolution, i.e. 
\begin{eqnarray*}
\gamma_{a,r}\boxplus_V\gamma_{b,s} & = & \gamma_{a+b,\sqrt{r^2+s^2}} \\
\gamma_{a,r}\boxdot\gamma_{b,s} & = &  \gamma_{ab, rs/2}
\end{eqnarray*}  
\end{prop}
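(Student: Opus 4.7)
The plan is to reduce both identities to short computations with the $R_V$-transform, using two structural facts recorded earlier in the excerpt. First, the $R_V$-transform linearises additive free convolution, i.e.\ $R_{\mu_1\boxplus_V\mu_2}=R_{\mu_1}+R_{\mu_2}$. Second, by the ring isomorphism $R_V\colon(\Sigma,\boxplus_V,\boxdot)\xrightarrow{\sim}(\C[[z]],+,\star)$ established in the Witt-vectors theorem, the Hadamard-type multiplication $\boxdot$ corresponds to the coefficient-wise Hadamard product $\star$, so also $R_{\mu_1\boxdot\mu_2}=R_{\mu_1}\star R_{\mu_2}$. The remaining input is the already computed identity $R_{\gamma_{a,r}}(z)=a+(r^2/4)z$, together with the uniqueness statement that a distribution in $\Sigma$ is determined by its $R_V$-transform.

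For the additive identity, I would simply add the two $R_V$-transforms:
$$R_{\gamma_{a,r}}(z)+R_{\gamma_{b,s}}(z)=(a+b)+\frac{r^2+s^2}{4}z,$$
observe that the right-hand side has the form $a'+((r')^2/4)z$ with $a'=a+b$ and $r'=\sqrt{r^2+s^2}$, and conclude by uniqueness of the $R_V$-transform that the result is $\gamma_{a+b,\sqrt{r^2+s^2}}$.

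For the multiplicative identity I would compute the Hadamard product coefficient-by-coefficient:
$$R_{\gamma_{a,r}}(z)\star R_{\gamma_{b,s}}(z)=ab+\frac{r^{2}s^{2}}{16}z,$$
and rewrite $r^2s^2/16=(rs/2)^2/4$, so that this is precisely the $R_V$-transform of $\gamma_{ab,rs/2}$. Invoking uniqueness again yields $\gamma_{a,r}\boxdot\gamma_{b,s}=\gamma_{ab,rs/2}$.

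There is no serious obstacle: both arguments are immediate once one accepts the two linearisation statements. The only point worth checking carefully — and it is really a soundness check rather than an obstruction — is that the resulting $R_V$-transform indeed comes from a semicircle, but this is automatic because the semicircle is characterised within $\Sigma$ by having an $R_V$-transform supported in degrees $0$ and $1$, and any such polynomial of this form is realised by a unique semicircular distribution. Because everything happens at the level of formal power series in $\C[[z]]$ under the ring isomorphism of the Witt-vector theorem, no analytic convergence or positivity issue intervenes, and the proof reduces to matching coefficients.
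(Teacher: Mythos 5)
Your proof is correct and follows exactly the route the paper sets up: it computes $R_{\gamma_{a,r}}(z)=a+\frac{r^2}{4}z$ immediately before the proposition precisely so that the additive claim follows from additivity of $R_V$ and the multiplicative claim from the coefficient-wise Hadamard product $\star$ under the ring isomorphism with $(\C[[z]],+,\star)$. The coefficient checks $(a+b)+\frac{r^2+s^2}{4}z$ and $ab+\frac{r^2s^2}{16}z=ab+\frac{(rs/2)^2}{4}z$ are both right, and your remark that a degree-$\leq 1$ cumulant series with nonnegative linear coefficient is realised by a unique semicircle closes the argument.
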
 

The {\bf free compound Poisson distribution with rate $\lambda\geq0$ and jump size $\alpha\in\R$} is the limit in distribution for $N\rightarrow\infty$ of
$$
\nu_N:=\nu_{N,\lambda,\alpha}:=\left(\left(1-\frac{\lambda}{N}\right)\delta_0+\frac{\lambda}{N}\delta_{\alpha}\right)^{\boxplus N}
$$ 
with $\boxplus N$ being $n$-fold additive free self-convolution. The free cumulants $(\kappa_n)_{n\in\N^{\times}}$ are given by $\kappa_n=\lambda\alpha^n$ for $n\in\N^{\times}$.

The ${R}_V$-transform of the limit $\nu_{\infty}$ is
$$
{R}_V(\nu_{\infty})(z)=\lambda\alpha\frac{1}{1-\alpha z}=\sum_{n=0}^{\infty}\lambda\alpha^{n+1}z^n=\lambda\alpha+\lambda\alpha^2 z+\lambda\alpha^3z^2+\cdots.
$$

\begin{prop}
The large $N$-limit of the free Poisson distribution $\nu_{\infty,\lambda,\alpha}$ with rate $\lambda=1$ and jump size $\alpha=1$, corresponds to the {\bf unit} with respect to $\boxdot$ multiplication in $\C[[z]]$, i.e. to $\sum_{n\in\N}z^n=1+z+z^2+\dots~.$  
\end{prop}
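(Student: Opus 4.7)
The plan is to verify the claim by a direct computation of the $R_V$-transform and then invoke the ring isomorphism from Theorem~3.1 to identify the result as the multiplicative unit.

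First I would take the expression for the $R_V$-transform of the free compound Poisson distribution $\nu_{\infty,\lambda,\alpha}$, which is already displayed just above the proposition:
$$
R_V(\nu_{\infty,\lambda,\alpha})(z)=\sum_{n=0}^{\infty}\lambda\alpha^{n+1}z^n,
$$
equivalently $\kappa_n=\lambda\alpha^n$ for all $n\in\N^{\times}$. Specialising $\lambda=1$ and $\alpha=1$, every free cumulant equals $1$, and the series collapses to
$$
R_V(\nu_{\infty,1,1})(z)=\sum_{n=0}^{\infty}z^n=1+z+z^2+\cdots~.
$$

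Next I would recall from Theorem~3.1 (second bullet) that the ring isomorphism $R_V^{-1}:(\C[[z]],+,\star,\underline{0},\underline{1})\to(\Sigma,\boxplus_V,\boxdot)$ carries Hadamard multiplication $\star$ on power series to the $\boxdot$-product on distributions. Because ring isomorphisms preserve the multiplicative identity, the $\boxdot$-unit in $\Sigma$ is precisely the preimage under $R_V$ of the Hadamard unit $\underline{1}=\sum_{n\in\N}z^n$. Combining this with the computation above shows that $\nu_{\infty,1,1}$ maps onto $\underline{1}$ under $R_V$, and is therefore the neutral element of $\boxdot$.

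There is essentially no obstacle beyond bookkeeping: the proposition is the combination of the explicit formula for the $R_V$-transform of a free compound Poisson law with the identification of $\boxdot$ with Hadamard product $\star$ under the isomorphism of Theorem~3.1. The only subtle point worth remarking on, and the one place where I would be careful, is the status of $\nu_{\infty,1,1}$ as an honest element of $\Sigma$: its moment sequence must grow in a way compatible with the ambient formal framework, but since the theory in the paper is developed at the level of cumulant series in $\C[[z]]$, the statement is automatically valid once the $R_V$-side matches $\underline{1}$.
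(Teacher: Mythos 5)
Your argument is correct and is essentially the one the paper intends: the displayed formula $R_V(\nu_\infty)(z)=\sum_{n\ge 0}\lambda\alpha^{n+1}z^n$ immediately preceding the proposition specialises at $\lambda=\alpha=1$ to the all-ones series, which is the unit for the Hadamard product $\star$, and the identification of $\boxdot$ with $\star$ under the $R_V$-transform isomorphism of Theorem~3.1 finishes the claim. The paper (being an announcement) gives no further detail, so your write-up matches its route exactly.
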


\section{Free $k$-probability theory}
Let $k$ be a commutative ring with unit $1_k$ and $R\in\mathbf{cAlg}_k$. For $s\in\N^{\times}$ consider the alphabet $[s]:=\{1,\dots,s\}$ with words $w$. The set of all finite words over $[s]$, including the empty word $\emptyset$, is denote by $[s]^*$, and set $[s]^*_+:=[s]^*\setminus\emptyset$.  

\begin{df}
A {\bf non-commutative $R$-valued $k$-probability space} (non-commutative $R-k$-probability space) consists of  a pair $(\mathcal{A},\phi)$, with $\mathcal{A}\in\mathbf{Alg}_k$ and $\phi$ a {\em fixed} $k$-linear functional with values in $R\in\mathbf{cAlg}_k$ such that  
$
\phi(1_{\mathcal{A}})=1_R.
$\\
The elements $a\in \mathcal{A}$ are called  {\bf non-commutative random variables}. 
\end{df}
\begin{df}
The {\bf $s$-dimensional distribution} or {\bf law} of $\phi$, with values in $R$, is the map
$$
\mu:\mathcal{A}^s\rightarrow \Hom_{k,1}(k\langle z_1,\dots, z_s\rangle,R),
$$
such that for any $s$-tuple $\underline{a}:=(a_1,\dots, a_s)$ we have
\begin{eqnarray*}
\label{s-distribution}
\mu_{(a_1,\dots, a_s)}&:&k\langle z_1,\dots, z_s\rangle\rightarrow R\\\nonumber
& & z_w\mapsto \mu_{(a_1,\dots, a_s)}(\underline{a}_w):=\phi(\underline{a}_w)\qquad  \forall w\in[s]^*.
\end{eqnarray*}
Then $\mu_{(a_1,\dots, a_n)}$ is called the {\bf $R$-distribution} of the $s$-tupel $(a_1,\dots, a_s)$. 
\end{df}

\section{Combinatorial Freeness}
The notion of freeness is modified in the general setting.
\begin{df}
Let $(\mathcal{A},\phi)$ be a non-commutative $R-k$-probability space. The subsets $M_1,\dots, M_n\subset\mathcal{A}$ are called {\bf combinatorially free} if they have vanishing mixed free cumulants.
\end{df}
\begin{prop} 
\begin{enumerate}
\item Let $\mathcal{A}_1,\dots, \mathcal{A}_n\subset\mathcal{A}$ be {\em combinatorially free} unital subalgebras in a non-commutative $R-k$-probability space $(\mathcal{A},\phi)$. Then they are also {\em classically free}, i.e., for all $m\geq 1$
$
\phi(a_1\cdots a_m)=0
$
whenever $a_{1}\in\mathcal{A}_{i_1},\dots, a_{m}\in\mathcal{A}_{i_m}$ with $i_1\neq i_2,\dots ,i_{m-1}\neq i_m$ and $\phi(a_i)=0$
\item For any $k$-valued non-commutative $k$-probability space, i.e. $\phi:\mathcal{A}\rightarrow k$, classical and combinatorial freeness are equivalent.
\end{enumerate}
\end{prop}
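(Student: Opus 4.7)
The plan is to base everything on the $R$-valued moment--cumulant formula
$$
\phi(a_1\cdots a_m)\;=\;\sum_{\pi}\kappa_{\pi}[a_1,\dots,a_m],
$$
where the sum runs over non-crossing partitions $\pi$ of $\{1,\dots,m\}$ and $\kappa_{\pi}=\prod_{B\in\pi}\kappa_{|B|}$ is the multiplicative extension of the free cumulants $\kappa_n\colon\mathcal{A}^n\to R$. This formula, obtained by M\"obius inversion on the non-crossing partition lattice, is purely combinatorial and transfers verbatim from the $\C$-valued setting to any commutative unital target ring $R$.

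For part (1), fix $a_j\in\mathcal{A}_{i_j}$ with $i_1\neq i_2,\dots,i_{m-1}\neq i_m$ and $\phi(a_j)=0$ for every $j$. I show every term $\kappa_{\pi}[a_1,\dots,a_m]$ vanishes in $R$. If $\pi$ contains a singleton $\{j\}$, its factor $\kappa_1(a_j)=\phi(a_j)$ is zero by hypothesis. Otherwise every block has size at least two, and a standard property of non-crossing partitions produces an interval block $B=\{j,j{+}1,\dots,j{+}\ell\}$ with $\ell\geq 1$; since consecutive indices differ, the arguments of $\kappa_B$ lie in at least two distinct subalgebras, so $\kappa_B$ is a mixed cumulant and vanishes by combinatorial freeness. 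Summing, $\phi(a_1\cdots a_m)=0$, which is exactly the classical freeness condition.

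For part (2) only the converse requires new work: assuming $\phi$ takes values in $k$ and that the $\mathcal{A}_i$ are classically free, I show by induction on $m$ that every mixed cumulant $\kappa_m(a_1,\dots,a_m)$, $a_j\in\mathcal{A}_{i_j}$ with indices not all equal, vanishes. By multilinearity of $\kappa_m$ it suffices to treat centered $a_j$'s ($\phi(a_j)=0$), and by grouping consecutive equal indices one reduces to the alternating case. The base $m=2$ is the identity $\kappa_2(a,b)=\phi(ab)-\phi(a)\phi(b)=0$ read off from classical freeness. For $m\geq 3$, apply the moment--cumulant formula to $\phi(a_1\cdots a_m)$: the partition $\widehat{1}_m$ contributes $\kappa_m(a_1,\dots,a_m)$, while every other partition $\pi$ either has a singleton $\{j\}$ (contributing factor $\phi(a_j)=0$) or an interval block $B$ of size $2\leq|B|<m$ on elements from distinct subalgebras (contributing factor $\kappa_B=0$ by the inductive hypothesis). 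Hence $\kappa_m(a_1,\dots,a_m)=\phi(a_1\cdots a_m)=0$ by classical freeness.

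The main obstacle is the reduction to centered arguments in part (2): replacing $a\in\mathcal{A}_i$ by $a-\phi(a)\cdot 1_{\mathcal{A}}$ requires $\phi(a)\cdot 1_{\mathcal{A}}\in\mathcal{A}$, hence $\phi(a)\in k$. In the general $R$-valued setting the image of $\phi$ need not embed in $\mathcal{A}$, this centering procedure is unavailable, and classical freeness becomes strictly weaker than combinatorial freeness. This is precisely the structural reason why the forward implication in part (1) holds in full $R$-valued generality while the converse in part (2) is pinned to the $k$-valued hypothesis.
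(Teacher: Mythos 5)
Your overall strategy --- running both directions through the $R$-valued moment--cumulant formula, killing each non-crossing partition via a singleton block or a mixed interval block, and isolating the centering trick as the precise reason part (2) is pinned to the $k$-valued case --- is the route the paper itself gestures at: the text gives no written argument beyond naming the ``centering trick'' as the essential tool and deferring details to~\cite{FMcK1,FMcK2}. Your part (1) is complete and correct, and your structural explanation of why the converse fails in the general $R$-valued setting matches the paper's emphasis.

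There is, however, a genuine gap in part (2), at the sentence ``by grouping consecutive equal indices one reduces to the alternating case.'' This reduction is not a formality: to pass from $\kappa_m(\dots,a_j,a_{j+1},\dots)$ with $i_j=i_{j+1}$ to a cumulant with the single entry $a_ja_{j+1}\in\mathcal{A}_{i_j}$ you need the formula for free cumulants with products as entries (Krawczyk--Speicher, cf.~\cite{NS}), which is itself a nontrivial piece of $\operatorname{NC}$-lattice combinatorics; and your induction really does need alternating indices twice, both to conclude $\phi(a_1\cdots a_m)=0$ from classical freeness and to guarantee that an interval block of size at least $2$ is mixed. Either import that formula explicitly (it is purely combinatorial, so it transfers to any commutative target ring and nothing else breaks), or restructure the induction to handle non-alternating index patterns directly. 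A smaller elision of the same kind: ``by multilinearity it suffices to treat centered $a_j$'s'' also uses that $\kappa_m$ vanishes for $m\geq 2$ whenever one entry is $1_{\mathcal{A}}$; this should be stated, though it follows from the moment--cumulant formula by an easy induction.
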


The essential tool in the proofs is the so-called ``Centering  trick": 

$
a-\underbrace{\phi(a)}_{\in k}1_{\mathcal{A}}
$
which satisfy $\phi(a-\phi(a)1_{\mathcal{A}})=\phi(a)-\phi(a)1_k=0$ as $\phi$ is $k$-linear and unital. 

\section{The combinatorial $\mathcal{R}$-transform}
\begin{df}
Let $\underline{a}=(a_1,\dots,a_s)\in\mathcal{A}^s$, $s\geq 1$. The {\bf $\mathcal{R}$-transform} with values in $R\in\mathbf{cAlg}_k$, is the map 
$
\mathcal{R}:\mathcal{A}^s\rightarrow R_+\langle\langle z_1,\dots, z_s\rangle\rangle
$
which assigns to every $\underline{a}$ the formal power series $\mathcal{R}(\underline{a})$ in $s$ non-commuting variables $\{z_1,\dots, z_s\}$, given by  
\begin{eqnarray*}
\mathcal{R}(\underline{a}):=\mathcal{R}_{\underline{a}}(z_1,\dots,z_s)&:=&\sum^{\infty}_{n=1}\sum^s_{i_1,\dots, i_n=1}\kappa_n(a_{i_1},\dots,a_{i_n})z_{i_1}\cdots z_{i_n}\\
\nonumber
&=&\sum_{|w|\geq1}\kappa_{|w|}(\underline{a}_w) z_w,
\end{eqnarray*} 
where $\kappa_n$ denotes the free cumulants. 
\end{df}

\section{Addition and multiplication of combinatorially free random variables}
For $\underline{a}=(a_1,\dots,a_n), \underline{b}=(b_1,\dots,b_n)\in \mathcal{A}^n$ and $\lambda\in k$ we have
\begin{eqnarray*}
\underline{a}+\underline{b} & := & (a_1+b_1,\dots,a_n+b_n), \\
\underline{a}\star\underline{b} & :=& (a_1\cdot b_1,\dots,a_n\cdot b_n),\\
\lambda\cdot\underline{a} & :=& (\lambda\cdot a_1,\dots,\lambda\cdot a_n).
\end{eqnarray*}
In general, the moment map does not preserve any of the above algebraic structures, e.g. in the additive case we would have
$$
\mathcal{M}(\underline{a}+\underline{b})\neq\mathcal{M}(\underline{a})+\mathcal{M}(\underline{b}),
$$
and in the multiplicative case
$$
\mathcal{M}(\underline{a}\star\underline{b})\neq\mathcal{M}(\underline{a})\star\mathcal{M}(\underline{b}).
$$

\begin{prop}
Let $(\mathcal{A},\phi)$ be a non-commutative $R-k$-probability space, and let $\underline{a}=(a_1,\dots,a_s), \underline{b}=(b_1,\dots,b_s)\in\mathcal{A}^s$ be such that $\{a_1,\dots,a_s\}$ and $\{b_1,\dots,b_s\}$ are combinatorially free. Then
there exist two binary operations  $\boxplus_V$ and $\boxtimes_V$ on $R_+\langle\langle z_1,\dots, z_s\rangle\rangle$
such that the
$\mathcal{M}$-transform defines algebraic morphisms
$$
\mathcal{M}:\mathcal{A}^s\rightarrow R_+\langle\langle z_1,\dots, z_s\rangle\rangle,
$$ 
viz., we have
\begin{itemize}
\item in the {\bf additive} case 
$$
\mathcal{M}(\underline{a}+\underline{b})=\mathcal{M}(\underline{a})\boxplus_V\mathcal{M}(\underline{b})
$$
\item in the {\bf multiplicative} case
$$
\mathcal{M}(\underline{a}\star\underline{b})=\mathcal{M}(\underline{a})\boxtimes_V\mathcal{M}(\underline{b})
$$
\end{itemize}
\end{prop}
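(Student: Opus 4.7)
The plan is to establish each identity first on the cumulant side, where combinatorial freeness becomes a transparent vanishing condition, and then transport the resulting operations to the moment side. Because the moment--cumulant relation $\phi(\underline{a}_w)=\sum_{\pi\in NC(|w|)}\kappa_\pi[\underline{a}_w]$ is unitriangular with universal coefficients in $\Z\subseteq k$, the passage between the $\mathcal{R}$-series and the $\mathcal{M}$-series of a tuple is a bijection on $R_+\langle\langle z_1,\dots,z_s\rangle\rangle$. Consequently, any binary operation defined on cumulant series transports to a binary operation on moment series, and it suffices to exhibit the additive and multiplicative morphism properties at the level of $\mathcal{R}$.

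\textbf{Additive case.} Free cumulants $\kappa_n$ are multilinear in their arguments, a purely combinatorial fact that persists over any commutative unital target ring. Expanding
$$
\kappa_n(a_{i_1}+b_{i_1},\dots,a_{i_n}+b_{i_n})
$$
as a sum of $2^n$ mixed cumulants and invoking the hypothesis that $\{a_1,\dots,a_s\}$ and $\{b_1,\dots,b_s\}$ are combinatorially free, every term coupling an $a$-entry with a $b$-entry vanishes, leaving only $\kappa_n(a_{i_1},\dots,a_{i_n})+\kappa_n(b_{i_1},\dots,b_{i_n})$. Summing against $z_{i_1}\cdots z_{i_n}$ yields $\mathcal{R}(\underline{a}+\underline{b})=\mathcal{R}(\underline{a})+\mathcal{R}(\underline{b})$. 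The operation $\boxplus_V$ on moment series is then defined as the transport of $+$ across the moment--cumulant bijection.

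\textbf{Multiplicative case.} This is the substantive step. One needs a closed expression for $\kappa_n(a_{i_1}b_{i_1},\dots,a_{i_n}b_{i_n})$ under combinatorial freeness. Applying the Nica--Speicher formula for cumulants with products as arguments, interpreted in the $R$-valued setting, one expands the left-hand side as a sum over non-crossing partitions of $\{1,\dots,2n\}$ that refine the alternating interval partition. Combinatorial freeness forces every block of the index partition on $\{a,b\}$-points to lie entirely within the $a$-positions or entirely within the $b$-positions, and the standard reindexing by Kreweras complementation collapses this to
$$
\kappa_n(a_{i_1}b_{i_1},\dots,a_{i_n}b_{i_n})=\sum_{\pi\in NC(n)}\kappa_\pi[\underline{a}_{i_1,\dots,i_n}]\,\kappa_{K(\pi)}[\underline{b}_{i_1,\dots,i_n}].
$$
At the level of generating series this identity is exactly the boxed convolution on $R_+\langle\langle z_1,\dots,z_s\rangle\rangle$; transporting it across the moment--cumulant bijection defines $\boxtimes_V$, completing the proposition.

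\textbf{Main obstacle.} The chief delicacy is re-deriving the Nica--Speicher product formula in the $R$-valued setting and verifying that combinatorial freeness selects exactly the pairs $(\pi,K(\pi))$ appearing above. Since $R$ is commutative and unital over $k$, every cumulant factor $\kappa_\pi[\cdots]$ takes values in the commutative ring $R$, so the sums and products involved behave as in the scalar case; all identities used (moment--cumulant inversion, multilinearity of $\kappa_n$, the product formula, Kreweras complementation) are polynomial with integer coefficients and are purely combinatorial consequences of the non-crossing partition lattice, so the scalar proofs transport verbatim. The only non-formal input is the vanishing of mixed cumulants, which is the very definition of combinatorial freeness in the $R$-$k$-valued framework.
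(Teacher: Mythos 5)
The paper itself gives no proof of this proposition --- it is an announcement, with details deferred to \cite{FMcK1,FMcK2} --- but the route it intends is visible in the later proposition expressing $\boxplus_V$ and $\boxtimes_V$ through $\boxtimes\operatorname{Moeb}_s$ and $\boxtimes\operatorname{Zeta}_s$, and your strategy is exactly that route: prove additivity and $\boxtimes$-multiplicativity of the $\mathcal{R}$-transform under combinatorial freeness, then transport both operations to moment series through the unitriangular, $\Z$-coefficient moment--cumulant bijection $\mathcal{M}=\mathcal{R}\boxtimes\operatorname{Zeta}$. Your additive case and your remarks on why the scalar combinatorics survive over a commutative unital $R$ are fine, and the target identity $\kappa_n(a_{i_1}b_{i_1},\dots,a_{i_n}b_{i_n})=\sum_{\pi\in\operatorname{NC}(n)}\kappa_\pi[\underline{a}]\,\kappa_{K(\pi)}[\underline{b}]$ is the correct one.

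There is, however, one concrete error in the multiplicative step as written. The Nica--Speicher formula for cumulants with products as arguments does \emph{not} sum over non-crossing partitions of $\{1,\dots,2n\}$ that \emph{refine} the interval partition $\hat{\sigma}=\{\{1,2\},\{3,4\},\dots,\{2n-1,2n\}\}$; it sums over those $\pi\in\operatorname{NC}(2n)$ satisfying the join condition $\pi\vee\hat{\sigma}=1_{2n}$, i.e.\ the partitions that \emph{connect} all the blocks of $\hat{\sigma}$. Taken literally, your version computes $\sum_{\pi\leq\hat{\sigma}}\kappa_\pi$, which is the product of moments $\prod_j\phi(a_{i_j}b_{i_j})$ and not the cumulant of the products, so the step as stated would fail. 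With the join condition restored, the rest of your argument (mixed cumulants vanish, the surviving $\pi$ decompose as $\pi_a\sqcup\pi_b$ with $\pi_b\leq K(\pi_a)$, and the connectedness condition forces $\pi_b=K(\pi_a)$) goes through and yields the boxed convolution, so this is a local misstatement of the key lemma rather than a flaw in the overall design; but since that lemma is the substantive content of the multiplicative case, it must be stated correctly.
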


\section{Boxed convolution}
Let $\pi$ denote a non-crossing partition and $K(\pi)$ its Kreweras complement.
\begin{df}
\label{boxconv_df}
Let $R\in\mathbf{cAlg}_k$. The {\bf boxed convolution} $\boxtimes$, is a binary operation 
\begin{eqnarray*}
\label{boxconv}
R_+\langle\langle x_1,\dots, x_s\rangle\rangle\times R_+\langle\langle x_1,\dots, x_s\rangle\rangle&\rightarrow& R_+\langle\langle x_1,\dots, x_s\rangle\rangle,\\\nonumber
(f,g)&\mapsto&f\boxtimes g,
\end{eqnarray*}
which for every word $w=(i_1,\dots, i_n)$ and integer $n\geq1$ satisfies:
$$
X_{w}(f\boxtimes g)=\sum_{\pi\in\operatorname{NC}(|w|)} X_{w,\pi}(f)\cdot_R X_{w,K(\pi)}(g).
$$
\end{df}
\begin{rem}
The operation $\boxtimes$ is completely described by {\bf universal polynomials with $\Z$-coefficients}, determined by the underlying combinatorics. 
The calculation of a coefficient of degree $|w|$ depends only on the coefficients of the monomials of degree~$\leq|w|$.
\end{rem}

\section{Moment-cumulant formula}
For any $\underline{a}\in\mathcal{A}^s$, we have: 
$
\mathcal{R}(\underline{a})\boxtimes\operatorname{Zeta}=\mathcal{M}(\underline{a}),
$
i.e.,
\begin{equation*}
\label{RSmu}
\begin{xy}
  \xymatrix{
 &  \mathcal{A}^s\ar[dl]_{\text{$\mathcal{R}$-transform}\quad} \ar[dr]^{\quad\text{$\mathcal{M}$-transform}}   &\\
                  R_+\langle\langle z_1,\dots, z_s\rangle\rangle\ar[rr]^{\boxtimes\operatorname{Zeta}}           &    & R_+\langle\langle z_1,\dots, z_s\rangle\rangle
  }
\end{xy}
\end{equation*}

\begin{prop}
The relation between $+$ and $\boxplus_V$, and between $\boxtimes$ and $\boxtimes_V$, respectively, is given by 
$$
\boxplus_V=\left(\bullet_1\boxtimes\operatorname{Moeb}_s+\bullet_2\boxtimes\operatorname{Moeb}_s\right)\boxtimes\operatorname{Zeta}_s,
$$
and
$$
\boxtimes_V=\bullet_1\boxtimes\operatorname{Moeb}_s\boxtimes\bullet_2.
$$
with $\bullet_i$, $i=1,2$, denoting the first and the second argument, respectively.
\end{prop}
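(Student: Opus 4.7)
The plan is to derive both identities by combining the moment–cumulant relation $\mathcal{R}(\underline{a})\boxtimes\operatorname{Zeta}_s=\mathcal{M}(\underline{a})$ stated in the previous section with the standard linearization properties of the $\mathcal{R}$-transform. Because the boxed convolution $\boxtimes$ is given by universal polynomials with $\Z$-coefficients (cf.\ the Remark following Definition~\ref{boxconv_df}), the series $\operatorname{Zeta}_s$ admits a $\boxtimes$-inverse $\operatorname{Moeb}_s$ in $R_+\langle\langle z_1,\dots,z_s\rangle\rangle$ for any $R\in\mathbf{cAlg}_k$, so one has the equivalent formula $\mathcal{M}(\underline{a})\boxtimes\operatorname{Moeb}_s=\mathcal{R}(\underline{a})$. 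I would use this passage between moments and cumulants at every step, together with the associativity of $\boxtimes$ and its right-distributivity over coefficient-wise addition.

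For the additive identity, the combinatorial freeness of the entries of $\underline{a}$ and $\underline{b}$ forces all mixed cumulants to vanish, hence the $\mathcal{R}$-transform is additive on the sum, $\mathcal{R}(\underline{a}+\underline{b})=\mathcal{R}(\underline{a})+\mathcal{R}(\underline{b})$. Substituting into the moment–cumulant formula yields
\begin{align*}
\mathcal{M}(\underline{a})\boxplus_V\mathcal{M}(\underline{b})
&=\mathcal{M}(\underline{a}+\underline{b})
=\bigl(\mathcal{R}(\underline{a})+\mathcal{R}(\underline{b})\bigr)\boxtimes\operatorname{Zeta}_s\\
&=\bigl(\mathcal{M}(\underline{a})\boxtimes\operatorname{Moeb}_s+\mathcal{M}(\underline{b})\boxtimes\operatorname{Moeb}_s\bigr)\boxtimes\operatorname{Zeta}_s,
\end{align*}
which is precisely the first displayed formula once one reads off that $\boxplus_V$ is the operation sending $(M_a,M_b)$ to the right-hand side.

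For the multiplicative identity, I would invoke the $s$-variate analogue of the Nica–Speicher multiplicative formula, namely $\mathcal{M}(\underline{a}\star\underline{b})=\mathcal{R}(\underline{a})\boxtimes\mathcal{M}(\underline{b})$ whenever $\{a_1,\dots,a_s\}$ and $\{b_1,\dots,b_s\}$ are combinatorially free. Granting this, associativity of $\boxtimes$ gives
\[
\mathcal{M}(\underline{a})\boxtimes_V\mathcal{M}(\underline{b})=\mathcal{R}(\underline{a})\boxtimes\mathcal{M}(\underline{b})=\mathcal{M}(\underline{a})\boxtimes\operatorname{Moeb}_s\boxtimes\mathcal{M}(\underline{b}),
\]
which is the desired expression $\boxtimes_V=\bullet_1\boxtimes\operatorname{Moeb}_s\boxtimes\bullet_2$.

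The main obstacle is establishing the multiplicative identity $\mathcal{M}(\underline{a}\star\underline{b})=\mathcal{R}(\underline{a})\boxtimes\mathcal{M}(\underline{b})$ in the $s$-dimensional $R$-valued setting. The proof strategy is to expand a mixed moment $\phi(a_{i_1}b_{i_1}\cdots a_{i_n}b_{i_n})$ as a sum over $\operatorname{NC}(2n)$ of products of free cumulants, use the vanishing of mixed cumulants to collapse this sum to partitions that separate $a$-blocks from $b$-blocks, and then recognize the resulting index set as pairs $(\pi,K(\pi))$ with $\pi\in\operatorname{NC}(n)$; the $a$-contribution factors as $X_{w,\pi}(\mathcal{R}(\underline{a}))$ and the $b$-contribution as $X_{w,K(\pi)}(\mathcal{M}(\underline{b}))$, which by Definition~\ref{boxconv_df} is exactly $X_w(\mathcal{R}(\underline{a})\boxtimes\mathcal{M}(\underline{b}))$. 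Tracking the interleaved word structure and the Kreweras complement correctly over a general base ring $R$ is the technical heart of the proof; once it is secured, everything else is formal rewriting in $R_+\langle\langle z_1,\dots,z_s\rangle\rangle$.
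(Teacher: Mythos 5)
The paper states this proposition without proof (all proofs are deferred to the companion papers \cite{FMcK1,FMcK2}), so there is no in-text argument to compare yours against; judged on its own terms, your derivation is essentially correct and is surely the intended one: pass between moments and cumulants via $\mathcal{M}=\mathcal{R}\boxtimes\operatorname{Zeta}_s$ and $\mathcal{R}=\mathcal{M}\boxtimes\operatorname{Moeb}_s$, and use that on combinatorially free tuples $\mathcal{R}$ is additive for $+$ and multiplicative (with respect to $\boxtimes$) for $\star$. One genuine correction: delete the claim that $\boxtimes$ is right-distributive over coefficient-wise addition. It is not --- Proposition~\ref{Group_prop}(3) states explicitly that $(\mathfrak{M}^s(R),+,\boxtimes)$ is \emph{not} distributive, and indeed $X_{w,\pi}(f)$ is a product of $|\pi|$ coefficients of $f$, hence not linear in $f$. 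Your displayed computations never actually invoke distributivity (you only substitute $\mathcal{R}(\underline{a})=\mathcal{M}(\underline{a})\boxtimes\operatorname{Moeb}_s$ inside the parenthesis before applying $\boxtimes\operatorname{Zeta}_s$ to the whole sum), so nothing breaks, but the false statement should be removed. Also, the existence of $\operatorname{Moeb}_s$ is better cited from Proposition~\ref{Group_prop}(4): $\operatorname{Zeta}_s$ has unit linear coefficients, hence lies in the group $\mathfrak{G}^s(R)$ and is $\boxtimes$-invertible; the triangularity remark after Definition~\ref{boxconv_df} is what makes the recursive inversion work.

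On the multiplicative half you make the argument harder than necessary. The identity $\mathcal{M}(\underline{a}\star\underline{b})=\mathcal{R}(\underline{a})\boxtimes\mathcal{M}(\underline{b})$, which you propose to establish by a direct expansion over $\operatorname{NC}(2n)$, is a formal consequence of $\mathcal{R}(\underline{a}\star\underline{b})=\mathcal{R}(\underline{a})\boxtimes\mathcal{R}(\underline{b})$ together with associativity of $\boxtimes$ and the moment--cumulant formula: $\mathcal{M}(\underline{a}\star\underline{b})=\bigl(\mathcal{R}(\underline{a})\boxtimes\mathcal{R}(\underline{b})\bigr)\boxtimes\operatorname{Zeta}_s=\mathcal{R}(\underline{a})\boxtimes\bigl(\mathcal{R}(\underline{b})\boxtimes\operatorname{Zeta}_s\bigr)=\mathcal{R}(\underline{a})\boxtimes\mathcal{M}(\underline{b})$. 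The combinatorics of mixed cumulants and Kreweras complements is thereby concentrated in the single statement that $\mathcal{R}$ carries $\star$ to $\boxtimes$ for combinatorially free tuples --- the $R$-valued analogue of the Nica--Speicher theorem, which transfers to arbitrary $R\in\mathbf{cAlg}_k$ precisely because the identities involved are universal polynomial identities with $\Z$-coefficients. That statement must still be proved somewhere, so your ``technical heart'' does not disappear; it is merely relocated to where the proposition of Section 8 already requires it, and it should not be re-derived ad hoc here.
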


\section{The $k$-functors}
Let $s\in\N^{\times}$, $R\in\mathbf{cAlg}_k$.
\begin{eqnarray*}
\mathfrak{M}^s(R)& := & \big\{\sum_{|w|\geq 1}\alpha_w z_w~|~\alpha_w\in R, w\in [s]^*\big\}\cong R^{[s]_+^*},\\
\mathfrak{G}^s(R) & := & \{(r_1,\dots, r_s,\dots, r_w,\dots)~|~r_i\in R^{\times}, i\in[s], r_w\in R, |w|\geq 2\},\\
\mathfrak{G}^s_+(R) & := & \{(1_1,\dots, 1_s,\dots, r_w,\dots)~|~r_w\in R, |w|\geq 2\},
\end{eqnarray*}
and the finite-dimensional restrictions of $\mathfrak{M}^s(R)$, $\mathfrak{G}^s(R)$ and $\mathfrak{G}^s_+(R)$, respectively, for $n\geq1$
\begin{eqnarray*}
(\mathfrak{M}^s)_n(R)&:=& \big\{\sum_{1\leq|w|\leq n }\alpha_w z_w\big\},\\
(\mathfrak{G}^s)_n(R) & := & \big\{\sum_{1\leq|w|\leq n }\alpha_w z_w~| \alpha_i\in R^{\times}\quad\text{for $i=1,\dots, s$}\big\},\\
(\mathfrak{G}^s_+)_n(R) & := & \big\{\sum_{1\leq|w|\leq n }\alpha_w z_w~|~\alpha_i=1_R\quad\text{for $i=1,\dots, s$}\big\}.
\end{eqnarray*} 

\section{Affine group schemes}
\begin{prop}
\label{Group_prop}
For all $s\in\N^{\times}$, $R\in\mathbf{cAlg}_k$ and the binary operation $\boxtimes$, the following hold:
\begin{enumerate}
\item $(\mathfrak{M}^s(R),+)$, with component-wise addition, is an abelian group, and it has as additive neutral element, $0_{\mathfrak{M}^s(R)}$, corresponding to the series with all coefficients being zero.
\item $(\mathfrak{M}^s(R),\boxtimes)$ is an associative multiplicative monoid, with unit 
$$
1_{\mathfrak{M}^s(R)}=1_Rz_1+\cdots+1_R z_s.
$$
\item $(\mathfrak{M}^s(R),+,\boxtimes)$, is an associative and {\bf not distributive} unital ring, which for $s\geq2$ is also not commutative.
\item $\mathfrak{G}^s(R)$ is a group with neutral element
$
1_{\mathfrak{G}^s(R)}=1_Rz_1+\cdots+1_R z_s.
$\\
For $s=1$ it is abelian and for $s\geq2$ non-abelian.
\end{enumerate}
\end{prop}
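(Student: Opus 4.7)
The plan is to prove the four assertions in turn, using the observation from the remark after Definition~\ref{boxconv_df} that each coefficient of $f\boxtimes g$ is a universal polynomial with $\Z$-coefficients in the coefficients of $f$ and $g$; hence every identity may be verified in the universal case and then specialised to any $R\in\mathbf{cAlg}_k$. Part~(1) is immediate, since $\mathfrak{M}^s(R)\cong R^{[s]^*_+}$ as an abelian group under coordinate-wise addition with the zero series as neutral element. For part~(2), write $\mathbf{1}:=1_R z_1+\dots+1_R z_s$; the coefficients of $\mathbf{1}$ vanish in degrees $\geq 2$, so in
\[
X_w(\mathbf{1}\boxtimes g)=\sum_{\pi\in\operatorname{NC}(|w|)} X_{w,\pi}(\mathbf{1})\cdot_R X_{w,K(\pi)}(g)
\]
only the partition $\pi=\hat 0_{|w|}$ into singletons survives, and since $K(\hat 0_{|w|})=\hat 1_{|w|}$ is the maximal partition one recovers $X_w(g)$. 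Exchanging roles gives $g\boxtimes\mathbf{1}=g$. Associativity of $\boxtimes$ is the multi-variable analogue of the Nica-Speicher identity: it reduces to a bijection on triples of non-crossing partitions governed by iterated Kreweras complements, which holds universally over $\Z$.

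For part~(3), associativity of the two operations is inherited from~(1) and~(2). The failure of distributivity and, for $s\geq 2$, of commutativity, are both detected by explicit low-degree counterexamples in the universal ring $\Z[\{x_w\},\{y_w\},\{h_w\}]$: expanding $X_w$ of $(f+g)\boxtimes h$ versus $f\boxtimes h+g\boxtimes h$ on a suitably chosen word reveals genuine cross-products that do not split across the sum, while for $s\geq 2$ non-commutativity is exposed by comparing the $X_w$ expansions of $f\boxtimes g$ and $g\boxtimes f$ on a word containing distinct letters. Each check is finite.

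Part~(4) is the substantive step. Closure of $\mathfrak{G}^s(R)$ under $\boxtimes$ is immediate because the coefficient of $z_i$ in $f\boxtimes g$ equals $X_i(f)\cdot X_i(g)\in R^\times$, and associativity together with the identity are inherited from part~(2). To construct $\boxtimes$-inverses, isolate in the defining sum the contributions of $\hat 1_{|w|}$ and $\hat 0_{|w|}$: for any word $w$ with $|w|=n\geq 2$,
\[
X_w(f\boxtimes g)=\Bigl(\prod_{j\in w}X_j(f)\Bigr)X_w(g)+X_w(f)\Bigl(\prod_{j\in w}X_j(g)\Bigr)+P_w,
\]
where $P_w$ is a polynomial in coefficients of $f$ and $g$ indexed by words of length strictly less than $n$. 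Fixing $f\in\mathfrak{G}^s(R)$, the equation $f\boxtimes g=\mathbf{1}$ is then solved recursively in $n$: at degree one the units $X_i(g)=X_i(f)^{-1}$ are forced, and at each higher degree $X_w(g)$ is uniquely determined by dividing a known quantity by the unit $\prod_{j\in w}X_j(f)\in R^\times$. Because $(\mathfrak{M}^s(R),\boxtimes)$ is a monoid by~(2), the right inverse so produced is automatically two-sided. For $s=1$, commutativity of $\boxtimes$ follows from a standard symmetry of the sum over $\operatorname{NC}(n)$ under the Kreweras complement together with commutativity of $R$; for $s\geq 2$ the non-commutativity noted in part~(3) applies. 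The main obstacle I anticipate is the combinatorial identity underlying associativity of $\boxtimes$: once that Nica-Speicher type formula is established in the $R$-valued setting, everything else in the proposition reduces to finite checks or to the degree-wise recursion above.
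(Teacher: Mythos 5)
The paper itself gives no proof of this proposition: it is an announcement that defers all arguments to the companion papers \cite{FMcK1,FMcK2}, so your proposal can only be measured against the standard Nica--Speicher treatment that the statement generalises from $\C$ to $R\in\mathbf{cAlg}_k$. Your route is exactly that standard one, and it is essentially correct: the unit computation via the surviving partitions $\hat 0_{|w|}$ and $\hat 1_{|w|}$, the triangular recursion for inverses obtained by isolating those two partitions (all remaining $\pi$ contribute only coefficients indexed by words of length $<|w|$, since $\pi\neq\hat 1_n$ forces all blocks of $\pi$ to be proper and $\pi\neq\hat 0_n$ forces the same for $K(\pi)$), and the $s=1$ commutativity via $K^2$ being a rotation, which preserves block sizes, are all sound. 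The reduction of every identity to universal polynomials with $\Z$-coefficients, so that associativity can be imported from the known $\C$-valued multi-variable case, is a legitimate and clean way to handle the one genuinely non-trivial ingredient, though as written it is a citation rather than a proof; you should state explicitly that an identity between polynomials with $\Z$-coefficients holds over every commutative ring once it holds over $\C$, and point to the precise multi-variable associativity statement you are invoking.

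One step needs repair. You assert that because $(\mathfrak{M}^s(R),\boxtimes)$ is a monoid, the right inverse produced by your recursion is automatically two-sided; in a general monoid a right inverse need not be two-sided, so this sentence as written is false. The fix is short: observe that the constructed right inverse $g$ again lies in $\mathfrak{G}^s(R)$, because $X_i(g)=X_i(f)^{-1}\in R^{\times}$, hence $g$ itself admits a right inverse $h$ by the same recursion, and then associativity gives $f=f\boxtimes(g\boxtimes h)=(f\boxtimes g)\boxtimes h=h$, so $g\boxtimes f=1_{\mathfrak{G}^s(R)}$. With that line added, and with the low-degree counterexamples for non-distributivity and non-commutativity actually written out (they are two- and three-letter computations), the proof is complete.
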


\begin{prop}
For all $s\in\N^{\times}$, $R\in\mathbf{cAlg}_k$, and the binary operation $\boxtimes$, the following holds:
\begin{enumerate}
\item $\mathfrak{G}^s(R)$ is a group, which is the {\bf semi-direct product} of the {\bf normal subgroup} $\mathfrak{G}^s_+(R)$ and the {\bf $s$-torus}
$(\mathfrak{G}^s)_1(R)\cong\mathbb{G}_m^s(R)$, i.e.,
$$
\mathfrak{G}^s(R)=\mathfrak{G}^s_1(R)\ltimes (\mathfrak{G}^s_+)(R)~,
$$
or equivalently, we have the exact sequence
\[
\begin{xy}
  \xymatrix{
                             0_R\ar[r]&\mathbb{G}_m^s(R)\ar[r]^{\iota} &\mathfrak{G}^s(R)\ar[r]^p &\mathfrak{G}^s_+(R)\ar[r] &1_R~.
               }
\end{xy}
\]
\item $\mathfrak{G^s}(R)$ and $\mathfrak{G}^s_+(R)$ are groups, filtered in ascending order by the subgroups $(\mathfrak{G}^s)_n(R)$ and $(\mathfrak{G}^s_+)_n(R)$, respectively.
\item $\mathfrak{G}^s(R)$ and $\mathfrak{G}^s_+(R)$ are {\em projective limits of finite dimensional groups}, i.e. 
$$
\mathfrak{G}^s(R)=\varprojlim_{n\in\N} (\mathfrak{G}^s)_n(R)\qquad\text{and}\quad\mathfrak{G}^s_+(R)=\varprojlim_{n\in\N} (\mathfrak{G}^s_+)_n(R)~.
$$
\end{enumerate}
\end{prop}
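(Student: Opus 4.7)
The plan is to obtain all three parts from the triangular behavior of $\boxtimes$ in word length, made explicit in the remark following Definition~\ref{boxconv_df}.

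For part~(1), I would define a projection $p:\mathfrak{G}^s(R)\rightarrow\mathbb{G}_m^s(R)$ sending $f=\sum_{|w|\geq 1}\alpha_w z_w$ to its tuple of linear coefficients $(\alpha_1,\ldots,\alpha_s)$, together with a splitting $\iota:(r_1,\ldots,r_s)\mapsto r_1z_1+\cdots+r_sz_s$. That both are group homomorphisms follows by evaluating the defining identity of $\boxtimes$ on a word $w=(i)$ of length one: $\operatorname{NC}(1)$ has a unique element which equals its Kreweras complement, so the sum collapses and yields $p(f\boxtimes g)=p(f)\cdot p(g)$ componentwise; manifestly $p\circ\iota=\operatorname{id}$. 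By construction $\ker(p)$ is precisely $\mathfrak{G}^s_+(R)$, which is therefore normal in $\mathfrak{G}^s(R)$, and the splitting $\iota$ produces the semi-direct product decomposition and the asserted exact sequence.

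For parts~(2) and~(3), the key input is the remark that the coefficient of $z_w$ in $f\boxtimes g$ depends only on the coefficients of $f$ and $g$ on words of length $\leq |w|$. It follows that truncation at degree $n$ descends to a well-defined binary operation on $(\mathfrak{G}^s)_n(R)$ and $(\mathfrak{G}^s_+)_n(R)$, making them into groups, and that the truncation maps $(\mathfrak{G}^s)_{n+1}(R)\twoheadrightarrow(\mathfrak{G}^s)_n(R)$ are surjective group homomorphisms. This organizes the finite-dimensional truncations into a filtered projective system, and each $(\mathfrak{G}^s)_n(R)$ is of finite type over $R$ because there are only finitely many words over $[s]$ of length $\leq n$. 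The canonical map $\mathfrak{G}^s(R)\rightarrow\varprojlim_n(\mathfrak{G}^s)_n(R)$ is bijective, since a formal power series is exactly the datum of its full sequence of truncations, and it is a group homomorphism by the same truncation-compatibility; the identical argument treats $\mathfrak{G}^s_+(R)$.

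The main (modest) obstacle I anticipate is verifying that each truncated monoid $(\mathfrak{G}^s)_n(R)$ is actually a group, i.e., the existence of $\boxtimes$-inverses. One solves $f\boxtimes g=1_{\mathfrak{G}^s(R)}$ degree-by-degree: at degree one this forces the linear part of $g$ to be the inverse of $(\alpha_1,\ldots,\alpha_s)$ in $(R^\times)^s$, which uses precisely the hypothesis $\alpha_i\in R^\times$ that distinguishes $\mathfrak{G}^s$ from the ambient monoid $\mathfrak{M}^s$ of Proposition~\ref{Group_prop}; at each higher degree, the top-degree term of the equation is \emph{linear} in the unknown coefficient of $g$ with coefficient a unit built from the $\alpha_i$, so the recursion closes uniquely. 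Once this is checked at every finite stage, passing to the limit produces the inverse in $\mathfrak{G}^s(R)$ itself, completing the proof.
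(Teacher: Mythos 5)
Your argument is correct, and it is the natural one; the paper itself offers no proof of this proposition (it is an announcement deferring details to the references), so there is no in-text argument to diverge from. Two small observations. First, your projection $p$ onto the linear coefficients, with kernel $\mathfrak{G}^s_+(R)$ and splitting $\iota$, establishes the decomposition in the form consistent with the claim that $\mathfrak{G}^s_+(R)$ is the \emph{normal} factor; note that the exact sequence as displayed in the statement has $\iota$ and $p$ oriented the other way (torus as kernel, $\mathfrak{G}^s_+$ as quotient), which is incompatible with that normality claim for $s\geq 2$ --- your version is the correct one. Second, in the inverse recursion you produce a right inverse $g$ solving $f\boxtimes g=1_{\mathfrak{G}^s(R)}$; to conclude you should add the one-line remark that $g$ again has unit linear coefficients, hence itself admits a right inverse, so all inverses are two-sided by the usual monoid argument. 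Your reading of the finite-level objects $(\mathfrak{G}^s)_n(R)$ as truncation \emph{quotients} forming a projective system (rather than literal $\boxtimes$-closed subsets of series supported in degree $\leq n$, which they are not) is the right one and is what makes part (3) go through.
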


\section{Lie Groups and Lie Algebras in characteristic 0}
\begin{thm} 
\label{main-iso}
Let $R$ be a $\Q$-algebra. Then the formal groups $\mathbb{G}_a^{\N}(R)$ and $(\mathfrak{G}^1_+(R),\boxtimes)$ are {\em isomorphic}.
\end{thm}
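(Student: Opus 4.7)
The plan is to exhibit the isomorphism as a formal logarithm, taking advantage of the fact that $(\mathfrak{G}^1_+(R),\boxtimes)$ is an abelian pro-unipotent formal group over the $\Q$-algebra $R$. The $\Q$-algebra hypothesis will enter only through the need to invert integers when integrating the obstruction at each stage.

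First, I would read off the pro-unipotent structure directly from Definition~\ref{boxconv_df}. Writing $f=z+\sum_{n\geq 2}a_n z^n$ and $g=z+\sum_{n\geq 2}b_n z^n$, setting $a_1=b_1=1$, the coefficient $c_n$ of $f\boxtimes g$ equals $\sum_{\pi\in\operatorname{NC}(n)}\prod_{B\in\pi}a_{|B|}\prod_{B'\in K(\pi)}b_{|B'|}$. The only partitions containing a block of size $n$ are $\pi=0_n$ and $\pi=1_n$, which contribute $b_n$ and $a_n$ respectively; any intermediate $\pi$ forces both $\pi$ and $K(\pi)$ to have blocks of size at most $n-1$. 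Hence
$$
c_n=a_n+b_n+Q_n(a_2,\dots,a_{n-1},b_2,\dots,b_{n-1}),
$$
with $Q_n\in\Z[\underline{a},\underline{b}]$ having no constant and no linear part. This identifies the ascending filtration $(\mathfrak{G}^1_+)_n(R)$ as a chain of normal subgroups whose successive quotients are isomorphic to $\mathbb{G}_a(R)$, and, together with Proposition~\ref{Group_prop}(4) applied with $s=1$, confirms commutativity of the entire group.

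Second, I would construct the desired isomorphism $\Phi\colon(\mathfrak{G}^1_+(R),\boxtimes)\to(\mathbb{G}_a^{\N}(R),+)$ coordinate by coordinate, exactly as one builds a formal logarithm on an abelian pro-unipotent group. Put $L_2(f):=a_2$ and suppose $L_2,\dots,L_{n-1}\in\Q[a_2,\dots,a_{n-1}]$ have been defined so that $L_i(f\boxtimes g)=L_i(f)+L_i(g)$ for every $i<n$. The obstruction to $a_n$ itself being additive is exactly $Q_n$, which by commutativity and associativity of $\boxtimes$ is a symmetric $2$-cocycle on $\mathbb{G}_a^{n-2}$ valued in $\mathbb{G}_a$. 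Over a $\Q$-algebra the symmetric cohomology $H^2_{\mathrm{sym}}(\mathbb{G}_a^{n-2},\mathbb{G}_a)$ vanishes, so there is a universal $R_n\in\Q[a_2,\dots,a_{n-1}]$ with $Q_n(a,b)=R_n(c_{<n})-R_n(a_{<n})-R_n(b_{<n})$; then $L_n(f):=a_n-R_n(a_2,\dots,a_{n-1})$ is the next additive coordinate. The map $\Phi(f):=(L_n(f))_{n\geq 2}$ is a group homomorphism, and its inverse is built dually by solving the triangular system, yielding the formal exponential $\Phi^{-1}$.

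The main obstacle is the cohomological vanishing used in the inductive step, which is what allows $R_n$ to exist. This is classical over fields of characteristic zero, and for a general $\Q$-algebra it reduces to integrating a symmetric polynomial whose denominators divide $n!$, each invertible by hypothesis; conceptually, it is precisely the place where the theorem can fail outside characteristic zero. Granted this, the verification that $\Phi$ is bijective and a group homomorphism is a routine coefficient-wise induction, completing the identification $(\mathfrak{G}^1_+(R),\boxtimes)\cong\mathbb{G}_a^{\N}(R)$.
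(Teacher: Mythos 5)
Your argument is correct, and in substance it coincides with the paper's: the paper's entire proof is to observe that $(\mathfrak{G}^1_+(R),\boxtimes)$ is an infinite-dimensional commutative formal group law (commutativity for $s=1$ being Proposition~\ref{Group_prop}(4), which you also invoke) and then to cite Lazard's theorem that over $\Q$ every commutative formal group law is isomorphic to the additive one. Your coordinate-wise construction of the formal logarithm --- reading off $c_n=a_n+b_n+Q_n(a_{<n},b_{<n})$ from Definition~\ref{boxconv_df}, noting that every term of $Q_n$ contains some $a_j$ and some $b_{j'}$ with $j,j'\geq 2$ (so the cocycle is normalized), and killing the obstruction at each stage by the vanishing of symmetric polynomial $2$-cohomology with values in $\mathbb{G}_a$ in characteristic zero --- is precisely the standard proof of the Lazard statement specialized to this group. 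What your version buys is self-containedness and an explicit localization of where the $\Q$-hypothesis enters (inverting the denominators of the primitives $R_n$); what it costs is one small imprecision: $Q_n$ is a symmetric $2$-cocycle for the truncated group $(\mathfrak{G}^1_+)_{n-1}(R)$ with its $\boxtimes$ law, not literally for $\mathbb{G}_a^{n-2}(R)$, and it must first be transported through the already-constructed isomorphism $(L_2,\dots,L_{n-1})$ before the cohomological vanishing for the additive group applies. Since the $L_i$ and their inverses are polynomial over $\Q$, that transport is routine, so this is a presentational rather than a mathematical gap.
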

\begin{proof}
\begin{itemize}
\item This follows from {\bf Lazard's Theorem}: ``Over $\Q$ every commutative formal group law $F$ is isomorphic to the $\dim(F)$-dimensional additive group law".
\item $(\mathfrak{G}^1_+(R),\boxtimes)$ defines an infinite commutative formal group law.
\end{itemize}
\end{proof}
\begin{rem}
\begin{itemize}
\item Our $\operatorname{LOG}$ in Thm.~\ref{LOG} is the concrete form of the above isomorphism.
\item The above theorem answers one of the question we started with.
\item For $s\geq 2$, $(\mathfrak{G}^s_+,\boxtimes)$ is not isomorphic to the additive group, as it is not commutative. 
\end{itemize} 
\end{rem}

\section{The co-ordinate algebras}
The $k$-functors are group valued and representable. By the Yoneda Lemma this gives rise to Hopf algebras, which we described next. 
\begin{thm}
\label{first_main_thm}
Let $s\in\N^{\times}$ and $k$ be a ring. For the formal group schemes we have
\begin{itemize}
\item $\mathfrak{G}^s$ is represented by
$$
k[X^{\pm 1}_1,\dots, X^{\pm 1}_s, X_w:|w|\geq 2];
$$
and it is not connected.
\item $(\mathfrak{G}^s)_1$ is represented by  
$$ k[X_1,X^{-1}_1,\dots,X_s,X^{-1}_s],$$
consisting of {\em group-like} elements.
\item $\mathfrak{G}^s_+$ is represented by the graded and connected Hopf algebra
$$
k[\bar{X}_w:|w|\geq 2]
$$
which for $s=1$ is co-commutative. 
\end{itemize}
\end{thm}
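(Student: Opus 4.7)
The plan is to establish representability directly from the functorial definitions and then read off the Hopf algebra structure from the group law $\boxtimes$ via the Yoneda Lemma.

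For representability, observe that as a functor $\mathbf{cAlg}_k\to\mathbf{Set}$, $\mathfrak{G}^s$ sends $R$ to $\prod_{i=1}^{s} R^{\times}\times\prod_{|w|\geq 2} R$, i.e., to the product of $s$ copies of $\mathbb{G}_m$ and one copy of $\mathbb{A}^1$ for each word of length $\geq 2$. Since $R\mapsto R$ is represented by $k[X]$ and $R\mapsto R^{\times}$ by $k[X^{\pm 1}]$, a product of such functors is represented by the tensor product of their coordinate rings, yielding exactly $k[X_1^{\pm 1},\dots,X_s^{\pm 1},X_w:|w|\geq 2]$. Replacing $X_i$ by $1_R$ gives $k[\bar X_w:|w|\geq 2]$ for $\mathfrak{G}^s_+$, and truncating to $|w|=1$ gives $k[X_1^{\pm 1},\dots,X_s^{\pm 1}]$ for $(\mathfrak{G}^s)_1$. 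The Hopf algebra structures are then induced by Yoneda: since $\mathfrak{G}^s(R)$ is a group under $\boxtimes$ for every $R$ (Proposition~\ref{Group_prop}), we obtain comultiplication, counit and antipode on each coordinate algebra, and all Hopf axioms follow from the group axioms. The comultiplication on generators is read off from Definition~\ref{boxconv_df}: the universal $\Z$-polynomial giving the coefficient of $z_w$ in $f\boxtimes g$ is precisely $\Delta(X_w)\in\mathcal{O}(\mathfrak{G}^s)^{\otimes 2}$, and the counit, dual to the unit $1_{\mathfrak{G}^s}=\sum_i z_i$, sends $X_i\mapsto 1_k$ and $X_w\mapsto 0$ for $|w|\geq 2$.

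For the torus $(\mathfrak{G}^s)_1$, Definition~\ref{boxconv_df} at $|w|=1$ collapses: $\operatorname{NC}(1)=\{\hat 1\}$ with $K(\hat 1)=\hat 1$, so $X_i(f\boxtimes g)=X_i(f)\cdot X_i(g)$, whence $\Delta(X_i)=X_i\otimes X_i$. Thus each $X_i$ is group-like and $(\mathfrak{G}^s)_1\cong\mathbb{G}_m^s$ as Hopf algebras. Consequently $\mathcal{O}(\mathfrak{G}^s)$ contains the infinite family of group-likes $\{X_1^{n_1}\cdots X_s^{n_s}:n_i\in\Z\}$, so its coradical strictly contains $k\cdot 1$, showing that $\mathfrak{G}^s$ is not connected as a Hopf algebra.

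Finally, for $\mathfrak{G}^s_+$ every $\bar X_w$ lies in the augmentation ideal. Assigning degree $|w|-1$ to $\bar X_w$ makes $\Delta$ homogeneous: for $\pi\in\operatorname{NC}(|w|)$ the Kreweras block-count identity $|\pi|+|K(\pi)|=|w|+1$ implies that each summand $\prod_{V\in\pi}\bar X_{w|_V}\otimes\prod_{V'\in K(\pi)}\bar X_{w|_{V'}}$ has total degree $(|w|-|\pi|)+(|w|-|K(\pi)|)=|w|-1=\deg\bar X_w$; and since the degree-$0$ piece is $k$, we obtain a graded and connected Hopf algebra. Co-commutativity for $s=1$ is the Yoneda-dual of the already-known commutativity of $(\mathfrak{G}^1_+,\boxtimes)$ (a consequence of Theorem~\ref{main-iso}). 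The main technical point I expect is the homogeneity check above, i.e., verifying that the universal polynomials defining $\boxtimes$ respect the grading by $|w|-1$ and assemble into a well-defined coproduct on the polynomial generators $\bar X_w$; once that Kreweras bookkeeping is in place, the rest is a formal consequence of Yoneda and the already-established group axioms.
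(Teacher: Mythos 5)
Your proposal is correct and follows essentially the route the paper itself indicates: the paper defers the detailed proof to~\cite{FMcK2}, but its surrounding text (representability of the group-valued $k$-functors, the Yoneda Lemma, and the explicit coproduct, counit and antipode read off from Definition~\ref{boxconv_df}) sets up exactly your argument, and your Kreweras-complement degree count $|\pi|+|K(\pi)|=|w|+1$ is the right verification that the coproduct respects the grading by $|w|-1$. One small correction: co-commutativity for $s=1$ should be sourced from the commutativity of $(\mathfrak{G}^1(R),\boxtimes)$ asserted in Proposition~\ref{Group_prop}, which holds over an arbitrary ring $k$, rather than from Theorem~\ref{main-iso}, which assumes a $\Q$-algebra and is itself a consequence of that commutativity.
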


The co-structure is given by:
\begin{itemize}
\item The co-product: $\Delta X_w(f,g):=X_w(f\boxtimes g)$, 
$$
\Delta X_w=\sum_{\pi\in\operatorname{NC}(|w|)} X_{w,\pi}\otimes X_{w,K(\pi)},
$$
where $X_{w,\pi}$ and $X_{w,K(\pi)}$ are as above.
\item For the co-unit: $\varepsilon(X_w):=X_w(1_{(\mathfrak{G}^s)_n})$ we get
\begin{equation}
\varepsilon(X_w)=\begin{cases}
      & 1\quad \text{for $w=(i), i\in\{1,\dots,s\}$}, \\
      &  0\quad \text{for $|w|\geq2$}.
\end{cases}
\end{equation}
\end{itemize}
This defines on $R[X_w| 1\leq|w|\leq n]$ the structure of a {\bf bi-algebra}.  
\begin{rem}
The elements $X_i$, $i\in\{1,\dots s\}$, are {\bf group-like}, i.e., they satisfy 
$$
\Delta(X_i)=X_i\otimes X_i\qquad\text{and}\qquad\varepsilon(X_i)=1.
$$
\end{rem}
The full structure is obtained with
\begin{prop}[Antipode of the boxed convolution]
Let $w=(i_1\dots i_n)\in[s]^*$ with $n\geq 2$. The antipode corresponding to the boxed convolution $\boxtimes$, is given by the {\em recursive} relation
\begin{equation}
\label{antipode}
S(X_w)=S(X_{(i_1\dots i_n)})=-(X^{-1}_{i_1}\cdots X^{-1}_{i_n})\cdot\sum_{\pi\neq 0_{n}}X_{w,\pi}\cdot S(X_{w,K(\pi)}).
\end{equation}
\end{prop}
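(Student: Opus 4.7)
The plan is to derive the formula directly from the defining axiom of the antipode in the bi-algebra described just above. That axiom states $m \circ (\operatorname{id} \otimes S) \circ \Delta = \eta \circ \varepsilon$, so applying both sides to the generator $X_w$ with $|w|=n\ge 2$ and using the coproduct from Theorem~\ref{first_main_thm} yields
$$\sum_{\pi\in\operatorname{NC}(n)} X_{w,\pi}\cdot S(X_{w,K(\pi)}) \;=\; \varepsilon(X_w)\cdot 1 \;=\; 0.$$
Since the coordinate algebra of the commutative group scheme $\mathfrak{G}^s$ is itself commutative and each $X_i$ is group-like with $S(X_i)=X_i^{-1}$, one has $S(X_{w,\sigma})=\prod_{V\in\sigma} S(X_{w|_V})$, and in particular the length-one case is already fixed. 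This will be the base of the induction.

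Next I would isolate from the above sum the unique term in which $K(\pi)$ is the one-block partition $1_n$, as this is the only summand containing $S(X_w)$ itself. Because $K$ is an involutive bijection on $\operatorname{NC}(n)$ satisfying $K(0_n)=1_n$, that term arises precisely for $\pi = 0_n$, the discrete (all-singletons) partition; then $X_{w,0_n}=X_{i_1}\cdots X_{i_n}$ is a product of group-likes, hence a unit in the coordinate ring. Moving this term to the left and multiplying by $X_{i_1}^{-1}\cdots X_{i_n}^{-1}$ yields exactly
$$S(X_w) \;=\; -\bigl(X_{i_1}^{-1}\cdots X_{i_n}^{-1}\bigr)\sum_{\pi\neq 0_n} X_{w,\pi}\cdot S(X_{w,K(\pi)}),$$
which is the claimed recursion.

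The last step is to verify that the recursion is well-posed, proceeding by induction on $n=|w|$. The key observation is that for every $\pi\neq 0_n$ one has $K(\pi)\neq 1_n$, so $K(\pi)$ contains at least two blocks and each Kreweras block corresponds to a subword of $w$ of length strictly less than $n$. Hence $S(X_{w,K(\pi)})$ on the right-hand side only invokes values of $S$ on strictly shorter words, so the recursive formula unambiguously defines an algebra endomorphism of $k[X_1^{\pm1},\dots,X_s^{\pm1},X_w : |w|\ge 2]$; uniqueness then follows from the uniqueness of the antipode in any Hopf algebra, which is guaranteed here because $(\mathfrak{G}^s,\boxtimes)$ is already known to be a group (Proposition~\ref{Group_prop}).

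I do not anticipate a deep obstruction: every step is forced once the coproduct formula is in hand. The only point that requires genuine attention is the Kreweras bookkeeping, in particular the fact that the one-block partition $1_n$ occurs as $K(\pi)$ for exactly one $\pi$, namely $\pi=0_n$, so that solving for $S(X_w)$ produces a single linear equation rather than a system. The remaining manipulations — invertibility of the group-like monomial $X_{i_1}\cdots X_{i_n}$, commutativity of the coordinate ring, and the termination argument — are routine once this combinatorial fact is confirmed.
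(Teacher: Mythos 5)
The paper states this proposition without proof (deferring details to the companion papers \cite{FMcK1,FMcK2}), but your derivation is exactly the argument the formula encodes — apply $m\circ(\operatorname{id}\otimes S)\circ\Delta=\eta\circ\varepsilon$ to $X_w$, isolate the unique summand with $K(\pi)=1_n$ (namely $\pi=0_n$), and invert the group-like monomial $X_{i_1}\cdots X_{i_n}$ — so it is correct and essentially the intended proof. Two harmless slips: the Kreweras complement is a lattice anti-isomorphism but not an involution ($K^2$ is a rotation), and $\mathfrak{G}^s$ is not a commutative group scheme for $s\geq 2$; what you actually use — that $K(\pi)=1_n$ iff $\pi=0_n$, and the commutativity of the coordinate \emph{algebra} $k[X_1^{\pm1},\dots,X_s^{\pm1},X_w:|w|\geq2]$ — both hold.
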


\section{Some types of matrices}
In the remaining sections we assume $k$ to be a field.
\begin{itemize}
\item The set of {\bf diagonal matrices}  
$$
\mathbb{T}_n=\{\operatorname{diag}(t_1,\dots,t_n)~|~t_i\in k^{\times}\}
$$ 
in $\operatorname{GL}_n(k)$ is called the {\bf torus}, 
\item the set of {\bf upper-triangular} matrices 
$$
\mathbb{B}_n(k):=\{(a_{ij})_{i,j}\in\operatorname{GL}_n(k)~ | \text{$a_{ij}=0$ for $i>j$}\}
$$ known as  the {\bf Borel subgroups}. 
\end{itemize}
Let 
$$
\mathbb{B}_{\infty}:=\varprojlim \mathbb{B}_n(k)\quad\text{and}\quad\mathbb{U}_{\infty}:=\varprojlim \mathbb{U}_n(k)
$$
be the corresponding {\bf projective limits}.

\begin{df}
Let $A$ be a square matrix $A$ and $E$ the unit matrix. Then, $A$ is called {\bf unipotent} if $A-E$ is {\bf nilpotent},  i.e., there exists an integer $n\geq1$ such that $(A-E)^n=0$.\\
An element $r$ in a ring $R$ with unit $1_R$, is called unipotent, if $(r-1_R)$ is nilpotent, i.e., $\exists n\geq 1$, $(r-1_R)^n=0_R$.
\end{df}
The the {upper-triangular} matrices with constant diagonal $1$ are unipotent
$$
\mathbb{U}_n(k):=\left(\begin{array}{ccccc}1 & * & \dots  & * \\0 & 1 & \ddots & \vdots  \\ \vdots& \ddots & \ddots & *  \\ 0& \cdots &  0& 1 \end{array}\right)
$$
with $*$ denoting arbitrary entries from $k$. 

The $n$-dimensional {\bf additive group} $\mathbb{G}^n_a(k)$ is unipotent and is faithfully represented by 
$$
\left(\begin{array}{cccccc}1 & x_1 & \cdots  & \cdots  &x_n\\0 & 1 & 0 & \dots  &0\\ \vdots& \ddots & \ddots & \ddots &\vdots\\ 0& \cdots &  0& 1 &0\\
0&0&\cdots&0&1\end{array}\right)
$$
with $x_i\in k$, $i=1,\dots, n$.

\section{Faithful representations}
\begin{prop} 
The groups $((\mathfrak{G}^s)_n,\boxtimes)$ and $((\mathfrak{G}^s_+)_n,\boxtimes)$ are {\bf faithfully representable} as closed subgroups of some $\operatorname{GL}_N$, $N=N(n,s)$ and $(\mathfrak{G}^s,\boxtimes)$ and $(\mathfrak{G}^s_+,\boxtimes)$ are pro-algebraic groups, i.e. projective limits of algebraic groups. 
\end{prop}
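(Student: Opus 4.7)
The plan is to first observe that by the preceding theorem on the coordinate algebras, the truncated functor $(\mathfrak{G}^s)_n$ is represented by the finitely generated commutative Hopf $k$-algebra obtained by keeping only the generators $X_i^{\pm 1}$ for $1\le i\le s$ and $X_w$ for $2\le |w|\le n$. The Remark following Definition~\ref{boxconv_df} guarantees that the co-product, co-unit and (via the recursive formula~\eqref{antipode}) the antipode all restrict to this subalgebra, since each is expressible through universal $\Z$-polynomials whose arguments involve only variables of degree $\le|w|$. Consequently $(\mathfrak{G}^s)_n$ is an affine algebraic group scheme over $k$; imposing the additional relations $X_i=1$ for $1\le i\le s$ realises $(\mathfrak{G}^s_+)_n$ as a closed algebraic subgroup.

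Next I would invoke the standard embedding theorem for affine algebraic groups (see e.g.\ Waterhouse, \emph{Introduction to Affine Group Schemes}, Theorem~3.4): any affine algebraic group over a field is isomorphic to a closed subgroup of some $\operatorname{GL}_N$. Explicitly, one chooses a finite-dimensional sub-coalgebra $V$ of the coordinate algebra that also generates it as an algebra — in our setting $V$ can be taken to be the span of the monomials in the generators of total degree bounded by an integer depending on $n$ and $s$ — and considers the restriction of the right regular representation to $V$. The resulting morphism into $\operatorname{GL}(V)\cong\operatorname{GL}_N$ is automatically a closed immersion because the matrix entries span the coordinate algebra and hence separate points. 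For $(\mathfrak{G}^s_+)_n$, which is in addition unipotent (its coordinate algebra is graded and connected by the preceding theorem), this embedding can be refined to land inside $\mathbb{U}_N$.

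For the pro-algebraic statement I would use the projective-limit presentation established earlier, $\mathfrak{G}^s=\varprojlim_n(\mathfrak{G}^s)_n$ and $\mathfrak{G}^s_+=\varprojlim_n(\mathfrak{G}^s_+)_n$. The transition maps are dual to the inclusions of the truncated Hopf subalgebras and are therefore morphisms of algebraic groups, so both $\mathfrak{G}^s$ and $\mathfrak{G}^s_+$ are by definition pro-algebraic (equivalently, affine) group schemes over $k$.

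The step I expect to be most delicate is the preliminary verification that the finite-degree truncation is genuinely a sub-Hopf algebra, because only then does the reduction to the standard affine-group-scheme dictionary apply. For the co-product this follows directly from the decomposition of $\operatorname{NC}(|w|)$ appearing in $\Delta X_w$, since every block in a non-crossing partition has size $\le|w|$. For the antipode it requires a short induction on $|w|$ using~\eqref{antipode}: assuming $S(X_{w'})$ lies in the truncated subalgebra for $|w'|<|w|$, every term on the right-hand side belongs to it as well, so $S(X_w)$ does too. Once this is in place, the rest of the argument is a direct application of well-known results on representability and projective limits of affine algebraic groups.
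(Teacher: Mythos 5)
Your argument is correct and is precisely the route the paper intends: the paper itself gives no proof (it is an announcement), but Theorem~\ref{first_main_thm}, the degree-preserving nature of $\boxtimes$ noted after Definition~\ref{boxconv_df}, the recursion~\eqref{antipode} for the antipode, and the filtration/projective-limit proposition are exactly the ingredients you assemble, combined with the standard Waterhouse embedding theorem (reference [W] in the paper's bibliography). Your identification of the delicate point --- verifying that the degree-$\le n$ truncation is a sub-Hopf algebra, with the induction on $|w|$ for the antipode --- is the right one, and the rest follows as you say.
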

\begin{thm}
\label{Higher-S-trafo}
Let $k$  be a field and $s,n\in\N^{\times}$. There exist {\bf faithful} representations $\rho_{n,s}$, for
\begin{itemize}
\item $((\mathfrak{G}^s)_n(k),\boxtimes)$ as closed sub-groups of the {\bf Borel groups} $\mathbb{B}_N(k)$, $N=N(n,s)$,
\item $((\mathfrak{G}^s_+)_n(k),\boxtimes)$ as a closed sub-groups of the {\bf unipotent groups} $\mathbb{U}_N(k)$, $N=N(n,s)$,
\item $((\mathfrak{G}^s)_1(k),\boxtimes)$ as a group of {\bf multiplicative type} $\mathbb{T}_s(k)$.
\end{itemize}
\end{thm}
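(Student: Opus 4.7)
The plan is to dispose of the three bullets independently, with the multiplicative-type and unipotent embeddings constructed first and then assembled for the Borel embedding.

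For the third bullet, $\operatorname{NC}(1)$ contains only the single one-block partition, so the defining sum for $\boxtimes$ collapses to $X_i(f\boxtimes g)=X_i(f)\cdot X_i(g)$ on $(\mathfrak{G}^s)_1$. The map $\sum_i\alpha_i z_i\mapsto\operatorname{diag}(\alpha_1,\dots,\alpha_s)$ is then a closed isomorphism of $((\mathfrak{G}^s)_1(k),\boxtimes)$ onto the diagonal torus $\mathbb{T}_s(k)\cong\mathbb{G}_m^s(k)$.

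The second bullet is the heart of the theorem. I would work inside the coordinate Hopf algebra $A=k[\bar{X}_w:2\leq|w|\leq n]$ of Theorem~\ref{first_main_thm} and exploit the Kreweras identity $|\pi|+|K(\pi)|=|w|+1$ for $\pi\in\operatorname{NC}(|w|)$. Combined with $\deg(\bar{X}_{w,\pi})=|w|-|\pi|$, this identity forces the coproduct to be homogeneous of degree $|w|-1$ for the grading $\deg\bar{X}_w:=|w|-1$, so that $A$ is a connected graded Hopf algebra. Let
\[
V:=\operatorname{span}_k\bigl\{\text{monomials in the }\bar{X}_w\text{ of total grading }\leq n-1\bigr\}\subset A,
\]
a finite-dimensional sub-coalgebra containing $1$ and all the algebra generators. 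The induced right $A$-comodule structure $\rho:=\Delta|_V$ yields a left linear action $g\cdot v:=(\operatorname{id}_V\otimes g)\rho(v)$ of $(\mathfrak{G}^s_+)_n(k)$ on $V$. Expansion on a monomial basis shows that the partition $\hat 1$ contributes the identity $v$, the partition $\hat 0$ contributes $X_w(g)\cdot 1$, and every remaining partition produces a term of strictly lower grading; hence
\[
g\cdot v=v+(\text{combination of strictly lower-grading basis monomials}).
\]
Ordering the monomial basis by increasing grading therefore puts each $\rho_{n,s}(g)$ in upper-unipotent form, so $\rho_{n,s}:(\mathfrak{G}^s_+)_n(k)\hookrightarrow\mathbb{U}_N(k)$ with $N=\dim V$. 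Faithfulness follows by induction on $|w|$: if $\rho_{n,s}(g)=I$ and $X_{w'}(g)=0$ for all $2\leq|w'|<|w|$, the coefficient of $1\in V$ in $g\cdot\bar{X}_w$ collapses to $X_w(g)$, forcing $X_w(g)=0$; closedness is automatic since the image is cut out by polynomial equations in the $X_w(g)$.

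For the first bullet, I would invoke the semi-direct decomposition $(\mathfrak{G}^s)_n=\mathbb{G}_m^s\ltimes(\mathfrak{G}^s_+)_n$ from the previous proposition. A short direct computation with $\boxtimes$ shows that the torus action on the unipotent normal subgroup by conjugation is in fact trivial, so the representations from the second and third bullets combine by direct sum into a closed immersion $(\mathfrak{G}^s)_n(k)\hookrightarrow\mathbb{T}_s(k)\times\mathbb{U}_N(k)\subset\mathbb{B}_{s+N}(k)$. The technical heart is entirely in the second bullet: the hard part is identifying the Kreweras-driven grading $\deg\bar{X}_w=|w|-1$ that turns $\Delta$ into a homogeneous map and carving out a finite-dimensional sub-coalgebra $V\subset A$ which simultaneously contains the algebra generators (needed for faithfulness) and is $\Delta$-stable (needed for the comodule structure); once this is in place, the unipotency and faithfulness of the representation are routine bookkeeping.
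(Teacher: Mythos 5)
The paper itself gives no proof of Theorem~\ref{Higher-S-trafo}: it is an announcement that defers all arguments to \cite{FMcK1,FMcK2}, so there is no in-paper argument to compare against. Judged on its own, your proposal is correct and is exactly the route the paper's scaffolding sets up: the third bullet is immediate since $\operatorname{NC}(1)$ is a singleton; the second bullet is the standard ``graded connected Hopf algebra $\Rightarrow$ unipotent'' argument, and your key computation is right --- with $\deg\bar{X}_w=|w|-1$ one has $\deg X_{w,\pi}=|w|-|\pi|$, so the Kreweras identity $|\pi|+|K(\pi)|=|w|+1$ makes $\Delta$ homogeneous, the span $V$ of monomials of grading $\leq n-1$ is a finite-dimensional subcoalgebra containing the generators $\bar{X}_w$, $2\leq|w|\leq n$, and since only $\pi=0_{|w|}$ yields a constant first tensor factor, the coefficient of $1$ in $g\cdot\bar{X}_w$ is exactly $X_w(g)$, which gives both strict unitriangularity and faithfulness (your induction on $|w|$ is not even needed). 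Two small points deserve tightening. First, ``closedness is automatic'' should be replaced by the standard lemma (Waterhouse, Ch.~3) that a finite-dimensional subcomodule containing algebra generators of a finitely generated Hopf algebra induces a \emph{closed} embedding into $\operatorname{GL}(V)$. Second, your claim that the torus acts trivially by conjugation is true and worth writing out: for a torus element $t$ only $\pi=0_n$ survives in $X_w(t\boxtimes g)$ and only $\pi=1_n$ in $X_w(g\boxtimes t)$, giving $X_w(t\boxtimes g)=t_{i_1}\cdots t_{i_n}X_w(g)=X_w(g\boxtimes t)$ over a commutative $R$; so the torus is central and the paper's semi-direct product is in fact a direct product, which is what legitimizes your block-diagonal assembly $\mathbb{T}_s(k)\times\mathbb{U}_N(k)\subset\mathbb{B}_{s+N}(k)$ for the first bullet.
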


\section{The general $S$-transform}
\begin{df}[$S$-transform]
The minimal faithful representation $\rho_k:(G^s_+(k),\boxtimes)\rightarrow\mathbb{B}_{\infty}(k)$ is called the {\bf $s$-dimensional $\mathcal{S}$-transform}. 

If $(\mathcal{A},\phi)$ is a non-commutative $k$-probability space and $\underline{a}=(a_1,\dots,a_s)\in\mathcal{A}^s$ then the $\mathcal{S}$-transform of $\underline{a}$ is defined as
\begin{equation}
\label{}
\mathcal{S}_k(\underline{a}):=\rho_k(\mathcal{R}_k(\underline{a}))~.
\end{equation}
\end{df}
\begin{thm}
Let $(\mathcal{A},\phi)$ be a non-commutative $k$-probability space and  $\underline{a}=(a_1,\dots,a_s), \underline{b}=(b_1,\dots,b_s)\in\mathcal{A}^s$ such that $\{a_1,\dots,a_s\}$ and $\{b_1,\dots,b_s\}$ are combinatorially free. 
Then the $\mathcal{S}$-transform is a faithful morphism
$$
\mathcal{S}_k:\mathcal{A}^s\rightarrow \varprojlim\mathbb{B}_N(k)
$$
which satisfies
\begin{equation}
\label{R_trafo_mult}
\mathcal{S}_k(\underline{a}\star\underline{b})=\mathcal{S}_k(\underline{a})\cdot\mathcal{S}_k(\underline{b})
\end{equation}
\end{thm}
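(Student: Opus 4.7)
The plan is to reduce multiplicativity of $\mathcal{S}_k$ to a single identity at the level of the $\mathcal{R}$-transform and then transport it through the representation $\rho_k$. Concretely, I first aim to prove the cumulant identity
$$
\mathcal{R}(\underline{a}\star\underline{b}) \;=\; \mathcal{R}(\underline{a})\boxtimes\mathcal{R}(\underline{b})
$$
under combinatorial freeness, and then apply $\rho_k$ to both sides, which turns $\boxtimes$ into matrix multiplication by Thm.~\ref{Higher-S-trafo}.

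For the cumulant identity I would assemble three ingredients from earlier in the paper: (a) the multiplicative proposition $\mathcal{M}(\underline{a}\star\underline{b}) = \mathcal{M}(\underline{a})\boxtimes_V\mathcal{M}(\underline{b})$, valid because $\{a_i\}$ and $\{b_i\}$ are combinatorially free; (b) the structural formula $\boxtimes_V = \bullet_1\boxtimes\operatorname{Moeb}_s\boxtimes\bullet_2$; and (c) the moment--cumulant formula $\mathcal{M}=\mathcal{R}\boxtimes\operatorname{Zeta}_s$. Substituting (b) into (a) yields
$$
\mathcal{M}(\underline{a}\star\underline{b}) \;=\; \mathcal{M}(\underline{a})\boxtimes\operatorname{Moeb}_s\boxtimes\mathcal{M}(\underline{b}).
$$
Rewriting each moment via (c), using associativity of $\boxtimes$ (Prop.~\ref{Group_prop}) and the M\"obius-inversion identity $\operatorname{Zeta}_s\boxtimes\operatorname{Moeb}_s = 1_{\mathfrak{M}^s}$ to collapse the central pair, I obtain
$$
\mathcal{R}(\underline{a}\star\underline{b})\boxtimes\operatorname{Zeta}_s \;=\; \mathcal{R}(\underline{a})\boxtimes\mathcal{R}(\underline{b})\boxtimes\operatorname{Zeta}_s,
$$
and right-cancellation by the unit $\operatorname{Zeta}_s\in\mathfrak{G}^s$ produces the desired identity.

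With the cumulant identity in hand, Thm.~\ref{Higher-S-trafo} provides $\rho_k$ as a faithful group homomorphism intertwining $\boxtimes$ with matrix multiplication in $\varprojlim\mathbb{B}_N(k)$. Applying $\rho_k$ to both sides gives
$$
\mathcal{S}_k(\underline{a}\star\underline{b}) \;=\; \rho_k\!\bigl(\mathcal{R}(\underline{a})\boxtimes\mathcal{R}(\underline{b})\bigr) \;=\; \rho_k(\mathcal{R}(\underline{a}))\cdot\rho_k(\mathcal{R}(\underline{b})) \;=\; \mathcal{S}_k(\underline{a})\cdot\mathcal{S}_k(\underline{b}),
$$
which is \eqref{R_trafo_mult}. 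Faithfulness of $\mathcal{S}_k$ as a morphism on joint distributions then follows from the faithfulness of $\rho_k$ combined with the fact that the joint $R$-distribution of $\underline{a}$ is uniquely determined by, and uniquely determines, $\mathcal{R}(\underline{a})$.

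The main obstacle I anticipate is in the first step: verifying the M\"obius-inversion identity $\operatorname{Zeta}_s\boxtimes\operatorname{Moeb}_s = 1_{\mathfrak{M}^s}$ in the multi-letter setting and then carrying out the right-cancellation rigorously. For $s\geq 2$ the boxed convolution is non-commutative (Prop.~\ref{Group_prop}), so the order $\bullet_1\boxtimes\operatorname{Moeb}_s\boxtimes\bullet_2$ is rigid and one must rely on associativity of $\boxtimes$ exactly as laid out by the structural formula; fortunately this is precisely the layout that permits the telescoping. A subsidiary point is to check that the elements involved lie in $\mathfrak{G}^s$, so that $\operatorname{Zeta}_s$ acts invertibly and the cancellation is legitimate; this follows from the standing normalisation that the first-order coefficients are units, inherited by the $\star$-product coordinate-wise.
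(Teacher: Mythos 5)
Your proposal is correct and is precisely the route the paper intends: although this announcement prints no proof, the chain consisting of the multiplicative-case proposition, the identity $\boxtimes_V=\bullet_1\boxtimes\operatorname{Moeb}_s\boxtimes\bullet_2$, the moment--cumulant formula $\mathcal{M}=\mathcal{R}\boxtimes\operatorname{Zeta}_s$, associativity, and right-cancellation of the invertible element $\operatorname{Zeta}_s$ gives exactly $\mathcal{R}(\underline{a}\star\underline{b})=\mathcal{R}(\underline{a})\boxtimes\mathcal{R}(\underline{b})$ (the Nica--Speicher product formula), after which applying the faithful group homomorphism $\rho_k$ of Thm.~\ref{Higher-S-trafo} is immediate. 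The one caveat, inherited from the paper's own statement rather than introduced by you, is that $\mathcal{R}(\underline{a})$ lies in the domain of $\rho_k$ only when the first-order cumulants $\phi(a_i)$ are units (resp.\ equal to $1_k$ for $\mathfrak{G}^s_+$), a hypothesis the theorem omits but which your closing remark about coordinate-wise inheritance under $\star$ already implicitly invokes.
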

\begin{rem}
An equivalent description can be given in terms of moment series, by using the moment-cumulant formula.
\end{rem}

Authors addresses:

Roland Friedrich, 53115 Bonn, Germany\\ rolandf@mathematik.hu-berlin.de

John McKay, Dept. Mathematics, Concordia University, Montreal, Canada H3G 1M8\\
mac@mathstat.concordia.ca

\begin{thebibliography}{00}
  \bibitem{FMcK0} R.~Friedrich and J.~McKay: ``Free Probability and Complex Cobordism", C. R. Math. Rep. Acad. Sci. Canada Vol. {\bf 33} (4), pp. 116-122 (2011)
  \bibitem{FMcK1} R.~Friedrich and J.~McKay, ``Formal Groups, Witt vectors and Free Probability", 2012  (arXiv) 
  \bibitem{FMcK2} R.~Friedrich and J.~McKay, ``Affine Group Schemes in Free Probability Theory", preprint 2013
\bibitem{H1} M. Hazewinkel, {\it Formal Groups and Applications}, Academic Press, 1978
  \bibitem{H} M.~Hazewinkel, {\it Witt vectors}. Part 1, revised version: 20 April 2008
    \bibitem{NS} A. Nica, R. Speicher, {\it Lectures on the Combinatorics of Free
Probability}, LMS LNS {\bf 335}, Cambridge University Press, (2006)
  \bibitem{S} R. Speicher, {\it Free probability theory and non-crossing partitions},
S{\' e}minaire Lotharingien de Combinatoire, {\bf B39c}, 38pp., (1997)
  \bibitem{V} D.V. Voiculescu, ``Multiplication of certain non-commuting random variables", J.   Operator Theory, 18, 223-235, (1987)
  \bibitem{VDN}
  D.V. Voiculescu, K.J. Dykema, A. Nica {\it Free random variables: a noncommutative probability approach to free products with applications to random matrices, operator algebras, and harmonic analysis on free groups}, AMS, (1992)
  \bibitem{W} W.C. Waterhouse ``Introduction to Affine Group Schemes", Springer 1979
\end{thebibliography}
\end{document}